\newtheorem{thm}{Theorem}[section]
\newtheorem{lem}[thm]{Lemma}
\newtheorem{prop}[thm]{Proposition}
\newtheorem{defn}[thm]{Definition}
\numberwithin{equation}{section}
\begin{document}

\title{\bf Anomaly cancellation formulas and $E_{8}$ bundles for almost complex manifolds }
\author{Siyao Liu \hskip 0.4 true cm Yong Wang$^{*}$ }

\thanks{{\scriptsize
\hskip -0.4 true cm \textit{2010 Mathematics Subject Classification:}
58C20; 57R20; 53C80.
\newline \textit{Key words and phrases:} Jacobi forms; Modular forms; Anomaly cancellation formulas; $E_{8}$ bundles.
\newline \textit{$^{*}$Corresponding author}}}

\maketitle

\begin{abstract}
\indent In this paper, we extend the elliptic genus in \cite{L2} by the gauge group $E_{8}$ and the gauge group $E_{8}\times E_{8}.$
Then we prove that the generalized elliptic genus are the weak Jacobi forms.
Using these elliptic genus, we obtain some $SL_{2}(\mathbf{Z})$ modular forms and get some new anomaly cancellation formulas of characteristic forms for almost complex manifolds.
\end{abstract}

\vskip 0.2 true cm


\pagestyle{myheadings}
\markboth{\rightline {\scriptsize Liu}}
         {\leftline{\scriptsize Anomaly cancellation formulas and $E_{8}$ bundles for almost complex manifolds}}

\bigskip
\bigskip


\section{ Introduction }

In \cite{AW}, Alvarez-Gaum\'{e} and Witten discovered the ``miraculous cancellation" formula.
This formula reveals a beautiful relation between the top components of the Hirzebruch $\widehat{L}$-form and $\widehat{A}$-form of a $12$-dimensional smooth Riemannian manifold $M.$
Liu established higher-dimensional ``miraculous cancellation" formulas for $(8k+4)$-dimensional Riemannian manifolds by developing modular invariance properties of characteristic forms \cite{L}.
Han and Zhang established a general cancellation formula that involves a complex line bundle for $(8k+4)$-dimensional smooth Riemannian manifold in \cite{HZ2,HZ}.
For higher-dimensional smooth Riemannian manifolds the authors obtained some cancellation formulas in \cite{HH,HLZ3,HLZ2,W1}.
And in \cite{LW}, the authors generalized the Han-Liu-Zhang cancellation formulas to the $(a, b)$ type cancellation formulas.

In \cite{L2}, Li extended the elliptic genus of an almost complex manifold to a twisted version and showed that it is a weak Jacobi form.
By the Lemma in \cite{G}, Li got some $SL_{2}(\mathbf{Z})$ modular forms of weight $d-l+n.$
From this, Li gave some new general cancellation formulas.
Wang defined a generalized elliptic genus of an almost complex manifold which generalizes the elliptic genus in \cite{L2}.
In \cite{W}, Wang proved more anomaly cancellation formulas by modular forms over $SL_{2}(\mathbf{Z})$ induced by the generalized elliptic genus.
Wang and Yang twisted the Chen-Han-Zhang $SL_{2}(\mathbf{Z})$ modular form \cite{CHZ} by $E_{8}$ bundles and got $SL_{2}(\mathbf{Z})$ modular forms of weight $14$ and $10$ for $14$ and $10$ dimensional spin manifolds \cite{WY}.
In \cite{HLZ}, Han, Liu and Zhang showed that both of the Ho$\check{r}$ava-Witten anomaly factorization formula for the gauge group $E_{8}$ and the Green-Schwarz anomaly factorization formula for the gauge group $E_{8}\times E_{8}$ could be derived through modular forms of weight $14.$

In this paper, we are interested in finding some new cancellation formulas.
According to \cite{L2} and \cite{W}, we found some of the elliptic genus of an almost complex manifold.
In this article, we generalize the previously existing elliptic genus through $E_{8}$ bundles and $E_{8}\times E_{8}$  bundles and show that the elliptic genus are weak Jacobi forms.
We get some $SL_{2}(\mathbf{Z})$ modular forms of weight $2d-l+4+n$ and $2d-l+8+n$ and we conclude some new anomaly cancellation formulas for a $2d$ dimensional almost complex manifold.

A brief description of the organization of this paper is as follows.
In Section 2, we introduce the elliptic genus for the gauge group $E_{8}$ and $E_{8}\times E_{8}$ and prove the elliptic genus are weak Jacobi forms. 
Section 3 and Section 4 contain discussions of anomaly cancellation formulas for the gauge groups $E_{8}$ and $E_{8}\times E_{8},$ respectively.


\vskip 1 true cm

\section{ The generalized elliptic genus }

Firstly, we give some definitions and basic notions that will be used throughout the paper.
For the details, see \cite{A,H,HW,K,Z}.

Suppose $(M, J)$ is a $2d$ dimensional almost complex manifold with an almost complex structure $J.$
Let $E, F$ be two Hermitian vector bundles over $M,$ for any complex number $q$
\begin{align}
\wedge_{q}(E)&=\mathbf{C}|_{M}+qE+q^{2}\wedge^{2}(E)+\cdot\cdot\cdot,\\
S_{q}(E)&=\mathbf{C}|_{M}+qE+q^{2}S^{2}(E)+\cdot\cdot\cdot,
\end{align}
denote respectively the total exterior and symmetric powers of $E,$ which live in $K(M)[[q]].$
The following relations between these operations hold
\begin{align}
S_{q}(E)=\frac{1}{\wedge_{-q}(E)},~~~~ \wedge_{q}(E-F)=\frac{\wedge_{q}(E)}{\wedge_{q}(F)}.
\end{align}
Here, for any complex vector bundle $E,$
\begin{align}
\wedge_{q}(E):=\bigoplus^{\infty}_{n=0}\wedge^{n}(E),~~~~ S_{q}(E):=\bigoplus^{\infty}_{n=0}S^{n}(E),
\end{align}
denote the generating series of the exterior and symmetric powers of $E,$ respectively.

Moreover, if $\{ \omega_{i} \}, \{ \omega'_{j} \}$ are formal Chern roots for Hermitian vector bundles $E, F$ respectively, then
\begin{align}
\text{ch}(\wedge_{q}(E))=\prod_{i}(1+e^{\omega_{i}}q).
\end{align}
Then we have the following formulas for Chern character forms,
\begin{align}
\text{ch}(S_{q}(E))=\frac{1}{\prod\limits_{i}(1-e^{\omega_{i}}q)},~~~~ \text{ch}(\wedge_{q}(E-F))=\frac{\prod\limits_{i}(1+e^{\omega_{i}}q)}{\prod\limits_{j}(1+e^{\omega'_{j}}q)}.
\end{align}

Refer to \cite{C}, we recall the four Jacobi theta functions are defined as follows:
\begin{align}
&\theta(\tau, z)=2q^{\frac{1}{8}}\sin(\pi z)\prod^{\infty}_{j=1}[(1-q^{j})(1-e^{2\pi\sqrt{-1}z}q^{j})(1-e^{-2\pi\sqrt{-1}z}q^{j})];\\
&\theta_{1}(\tau, z)=2q^{\frac{1}{8}}\cos(\pi z)\prod^{\infty}_{j=1}[(1-q^{j})(1+e^{2\pi\sqrt{-1}z}q^{j})(1+e^{-2\pi\sqrt{-1}z}q^{j})];\\
&\theta_{2}(\tau, z)=\prod^{\infty}_{j=1}[(1-q^{j})(1-e^{2\pi\sqrt{-1}z}q^{j-\frac{1}{2}})(1-e^{-2\pi\sqrt{-1}z}q^{j-\frac{1}{2}})];\\
&\theta_{3}(\tau, z)=\prod^{\infty}_{j=1}[(1-q^{j})(1+e^{2\pi\sqrt{-1}z}q^{j-\frac{1}{2}})(1+e^{-2\pi\sqrt{-1}z}q^{j-\frac{1}{2}})],
\end{align}
where $q=e^{2\pi\sqrt{-1}\tau}$ with $\tau\in\mathcal{H},$ the upper half plane.

In what follows,
\begin{align}
SL_{2}(\mathbf{Z})=\Big\{ \Big( \begin{array}{cc}
a & b\\
c & d_{0}
\end{array}
\Big)\Big|a, b, c, d_{0}\in\mathbf{Z}, ad_{0}-bc=1
\Big \}
\end{align}
stands for the modular group. Write $S=\Big(\begin{array}{cc}
0 & -1\\
1 & 0
\end{array}\Big),
T=\Big(\begin{array}{cc}
1 & 1\\
0 & 1
\end{array}\Big)$
be the two generators of $SL_{2}(\mathbf{Z}).$
They act on $\mathcal{H}$ by $S\tau=-\frac{1}{\tau}, T\tau=\tau+1.$
One has the following transformation laws of theta functions under the actions of $S$ and $T:$
\begin{align}
&\theta(\tau+1, z)=e^{\frac{\pi\sqrt{-1}}{4}}\theta(\tau, z),~~~~ \theta\Big(-\frac{1}{\tau}, z\Big)=\frac{1}{\sqrt{-1}}\Big(\frac{\tau}{\sqrt{-1}}\Big)^{\frac{1}{2}}e^{\pi\sqrt{-1}\tau z^{2}}\theta(\tau, \tau z);\\
&\theta_{1}(\tau+1, z)=e^{\frac{\pi\sqrt{-1}}{4}}\theta_{1}(\tau, z),~~~~ \theta_{1}\Big(-\frac{1}{\tau}, z\Big)=\Big(\frac{\tau}{\sqrt{-1}}\Big)^{\frac{1}{2}}e^{\pi\sqrt{-1}\tau z^{2}}\theta_{2}(\tau, \tau z);\\
&\theta_{2}(\tau+1, z)=\theta_{3}(\tau, z),~~~~ \theta_{2}\Big(-\frac{1}{\tau}, z\Big)=\Big(\frac{\tau}{\sqrt{-1}}\Big)^{\frac{1}{2}}e^{\pi\sqrt{-1}\tau z^{2}}\theta_{1}(\tau, \tau z);\\
&\theta_{3}(\tau+1, z)=\theta_{2}(\tau, z),~~~~ \theta_{3}\Big(-\frac{1}{\tau}, z\Big)=\Big(\frac{\tau}{\sqrt{-1}}\Big)^{\frac{1}{2}}e^{\pi\sqrt{-1}\tau z^{2}}\theta_{3}(\tau, \tau z).
\end{align}

Let $E_{2}(\tau)$ be Eisenstein series which is a quasimodular form over $SL_{2}(\mathbf{Z}),$ satisfying
\begin{align}
E_{2}\Big(\frac{a\tau+b}{c\tau+d_{0}}\Big)=(c\tau+d_{0})^{2}E_{2}(\tau)-\frac{6\sqrt{-1}c(c\tau+d_{0})}{\pi}.
\end{align}
In particular, we have
\begin{align}
&E_{2}(\tau+1)=E_{2}(\tau),\\
&E_{2}\Big(-\frac{1}{\tau}\Big)=\tau^{2}E_{2}(\tau)-\frac{6\sqrt{-1}\tau}{\pi},
\end{align}
and
\begin{align}
E_{2}(\tau)=1-24q-72q^{2}+\cdot\cdot\cdot,
\end{align}
where the $``\cdot\cdot\cdot"$ terms are the higher degree terms, all of which have integral coefficients.

For the principal $E_{8}$ bundles $P_{\alpha}, \alpha=1,2$ consider the associated bundles
\begin{align}
\mathcal{V}_{\alpha}=\sum_{n=0}^{\infty}(P_{\alpha}\times_{\rho_{n}}V_{n})q^{n}\in K(M)[[q]].
\end{align}
Let $\mathcal{W}_{\alpha}=P_{\alpha}\times_{\rho_{1}}V_{1}, \alpha=1,2$ be the complex vector bundles associated to the adjoint representation $\rho_{1},$ $\overline{\mathcal{W}_{\alpha}}=P_{\alpha}\times_{\rho_{2}}V_{2}, \alpha=1,2$ be the complex vector bundles associated to the adjoint representation $\rho_{2}.$

In the following, we assume that $2d<16.$
Following the notation of \cite{HLZ}, we have that there are formal two forms $y_{\kappa}^{\alpha}, 1\leq \kappa\leq 8, \alpha=1,2$ such that
\begin{align}
\varphi(\tau)^{(8)}ch(\mathcal{V}_{\alpha})=\frac{1}{2}\Big(\prod_{\kappa=1}^{8}\theta_{1}(\tau, y_{\kappa}^{\alpha})+\prod_{\kappa=1}^{8}\theta_{2}(\tau, y_{\kappa}^{\alpha})+\prod_{\kappa=1}^{8}\theta_{3}(\tau, y_{\kappa}^{\alpha})\Big),
\end{align}
and
\begin{align}
\sum_{\kappa=1}^{8}(2\pi\sqrt{-1}y_{\kappa}^{\alpha})^{2}=-\frac{1}{30}c_{2}(\mathcal{W}_{\alpha}),
\end{align}
where $\varphi(\tau)=\prod_{j=1}^{\infty}(1-q^{j}),$ $c_{2}(\mathcal{W}_{\alpha})$ denotes the second Chern class of $\mathcal{W}_{\alpha}.$

\begin{lem}(\cite{G,BL})
A holomorphic function $\phi(\tau, z):\mathcal{H}\times \mathcal{C}\rightarrow\mathcal{C}$  is called a weak Jacobi form of weight $k\in\mathbf{Z}/2$ and index $t\in\mathbf{Z}/2$ if it satisfies the functional equations
\begin{align}
\phi\Big(\frac{a\tau+b}{c\tau+d_{0}}, \frac{z}{c\tau+d_{0}}\Big)=(c\tau+d_{0})^{k}\exp{(\frac{2\pi\sqrt{-1}tcz^{2}}{c\tau+d_{0}})}\phi(\tau, z), ~\Big(\begin{array}{cc}
a & b\\
c & d_{0}
\end{array}\Big)\in SL_{2}(\mathbf{Z})
\end{align}
and
\begin{align}
\phi(\tau, z+\lambda\tau+\mu)=(-1)^{2t(\lambda+\mu)}\exp{(-2\pi\sqrt{-1}t(\lambda^{2}\tau+2\lambda z))}\phi(\tau, z), ~\lambda,\mu\in\mathbf{Z},
\end{align}
where $\mathcal{C}$ is the complex plane.
\end{lem}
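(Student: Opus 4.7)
The statement labelled as Lemma 2.1 is in fact a definition of a weak Jacobi form being recalled from the cited references \cite{G,BL} (the framework originates with Eichler--Zagier and was extended by Gritsenko and by Borcherds--Libgober), rather than a theorem with genuine mathematical content. Consequently there is no substantive assertion to establish; the only ``proof'' consists in verifying that the two displayed functional equations are internally consistent and jointly define a well-posed notion, after which the statement serves purely to fix terminology for use in Sections 3 and 4.

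The minimal verification I would perform is as follows. First I would check that the automorphy factor in equation (2.22), namely $(c\tau+d_{0})^{k}\exp(2\pi\sqrt{-1}tcz^{2}/(c\tau+d_{0}))$, is a $1$-cocycle for the action of $SL_{2}(\mathbf{Z})$ on $\mathcal{H}\times\mathbf{C}$ given by $(\tau,z)\mapsto((a\tau+b)/(c\tau+d_{0}),\,z/(c\tau+d_{0}))$. Because $SL_{2}(\mathbf{Z})$ is generated by the matrices $S$ and $T$ introduced earlier in the text, it suffices to check the cocycle identity on these generators, which is a short routine computation involving the standard identity $z^{2}/(c\tau+d_{0})$ transforming additively under composition.

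Second I would check that the factor $(-1)^{2t(\lambda+\mu)}\exp(-2\pi\sqrt{-1}t(\lambda^{2}\tau+2\lambda z))$ appearing in equation (2.23) is additive in $(\lambda,\mu)\in\mathbf{Z}^{2}$ modulo the replacement $z\mapsto z+\lambda\tau+\mu$; this reduces to the algebraic identity $\lambda^{2}+2\lambda\lambda'+(\lambda')^{2}=(\lambda+\lambda')^{2}$ together with the parity of $2t(\lambda+\mu)$. Finally one observes that the two transformation laws are compatible with each other, so that together they furnish an action of the full Jacobi group $SL_{2}(\mathbf{Z})\ltimes\mathbf{Z}^{2}$.

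No genuine obstacle arises at this stage. The real work lies ahead: in the next sections one must take the elliptic genera constructed from (2.20)--(2.21) and verify that they satisfy (2.22)--(2.23) for explicit weights $k$ and indices $t$; the transformation laws of $\theta,\theta_{1},\theta_{2},\theta_{3}$ listed in (2.12)--(2.15), together with the quasimodular behaviour of $E_{2}$ in (2.16), will be the tools used there. Thus the lemma itself should be regarded as a bookkeeping device rather than a theorem requiring proof.
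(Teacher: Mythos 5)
Your assessment is correct and matches the paper's treatment: Lemma 2.1 is a definition of weak Jacobi forms recalled from \cite{G,BL}, and the paper itself supplies no proof, merely the citation. Your additional consistency checks (cocycle property of the automorphy factors and compatibility under the Jacobi group action) are sensible but optional bookkeeping beyond what the paper does.
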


Let $T$ be the holomorphic tangent bundle in the sense of $J$ and $T^{*}$ is the dual of $T.$
Let $W$ denote a complex $l$ dimensional vector bundle on $M$ and $W^{*}$ is the dual of $W.$
Set the first Chern classes of $T$ and $W$ by $c_{1}(M)$ and $c_{1}(W).$
We denote by $2\pi\sqrt{-1}x_{i}~(1\leq i\leq d)$ and $2\pi\sqrt{-1}w_{j}~(1\leq j\leq l)$ respectively the formal Chern roots of $T$ and $W.$
Then the Todd form of $(M, J)$ is defined by
\begin{align}
Td(M):=\prod_{i=1}^{d}\frac{2\pi\sqrt{-1}x_{i}~}{1-\exp{(-2\pi\sqrt{-1}x_{i})}}.
\end{align}

\begin{defn}
The generalized elliptic genus of $(M, J)$ with respect to $W,$ which we denote by $Ell(M, W, \mathcal{V}_{\alpha}, \tau, z)$ is defined by
\begin{align}
Ell(M, W, \mathcal{V}_{\alpha}, \tau, z):=&\Big\{\exp{\Big(\frac{1}{24}E_{2}(\tau)\cdot\frac{1}{30}c_{2}(\mathcal{W}_{\alpha})+\frac{c_{1}(W)-c_{1}(M)}{2}\Big)}Td(M)\\
&ch(E(M, W, \tau, z))\cdot\varphi(\tau)^{(8)}ch(\mathcal{V}_{\alpha})\Big\}^{(2d)},\nonumber
\end{align}
where
\begin{align}
E(M, W, \tau, z):=c^{2(d-l)}y^{-\frac{l}{2}}\bigotimes^{\infty}_{m=1}\Big(\wedge_{-yq^{m-1}}(W^{*})\otimes\wedge_{-y^{-1}q^{m}}(W)\otimes S_{q^{m}}(T^{*})\otimes S_{q^{m}}(T)\Big)
\end{align}
and $y=e^{2\pi\sqrt{-1}z},$ $c=\prod^{\infty}_{j=1}(1-q^{j}).$
\end{defn}

\begin{lem}
We have
\begin{align}
&Ell(M, W, \mathcal{V}_{\alpha}, \tau, z):=\Big\{\exp{\Big(\frac{1}{24}E_{2}(\tau)\cdot\frac{1}{30}c_{2}(\mathcal{W}_{\alpha})\Big)}\eta(\tau)^{3(d-l)}\prod_{i=1}^{d}\frac{2\pi\sqrt{-1}x_{i}}{\theta(\tau, x_{i})}\\
&\cdot\prod_{j=1}^{l}\theta(\tau, w_{j}-z)\cdot\frac{1}{2}\Big(\prod_{\kappa=1}^{8}\theta_{1}(\tau, y_{\kappa}^{\alpha})+\prod_{\kappa=1}^{8}\theta_{2}(\tau, y_{\kappa}^{\alpha})+\prod_{\kappa=1}^{8}\theta_{3}(\tau, y_{\kappa}^{\alpha})\Big)\Big\}^{(2d)},\nonumber
\end{align}
where
\begin{align}
\eta(\tau):=q^{\frac{1}{24}}\cdot c=q^{\frac{1}{24}}\prod^{\infty}_{j=1}(1-q^{j}).
\end{align}
\end{lem}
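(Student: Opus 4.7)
The plan is a direct substitution: expand the Chern characters in the definition (2.25) of $Ell$ using formulas (2.5)--(2.6), then recognise the resulting infinite products using the product expansions (2.7)--(2.10) of the Jacobi theta functions. Explicitly, after applying (2.5)--(2.6),
\begin{align*}
ch(E(M,W,\tau,z))=c^{2(d-l)}y^{-l/2}&\prod_{j=1}^{l}\prod_{m=1}^{\infty}(1-e^{-2\pi\sqrt{-1}w_{j}}yq^{m-1})(1-e^{2\pi\sqrt{-1}w_{j}}y^{-1}q^{m})\\
\cdot&\prod_{i=1}^{d}\prod_{m=1}^{\infty}\frac{1}{(1-e^{-2\pi\sqrt{-1}x_{i}}q^{m})(1-e^{2\pi\sqrt{-1}x_{i}}q^{m})},
\end{align*}
and I multiply this by $Td(M)=\prod_{i}\frac{2\pi\sqrt{-1}x_{i}}{1-e^{-2\pi\sqrt{-1}x_{i}}}$. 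The task is then to match these products against the product expansions of $\theta(\tau,x_{i})$ and $\theta(\tau,w_{j}-z)$.

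The key identity is $\theta(\tau,u)=2q^{1/8}\sin(\pi u)\cdot c\cdot\prod_{m\ge 1}(1-e^{2\pi\sqrt{-1}u}q^{m})(1-e^{-2\pi\sqrt{-1}u}q^{m})$. Applied with $u=x_{i}$ it converts the $T$-part of $Td(M)\,ch(E)$ into $\prod_{i}\frac{2\pi\sqrt{-1}x_{i}}{\theta(\tau,x_{i})}$ multiplied by correction factors governed by the elementary computation $\frac{2\sin(\pi x_{i})}{1-e^{-2\pi\sqrt{-1}x_{i}}}=-\sqrt{-1}e^{\pi\sqrt{-1}x_{i}}$. Applied with $u=w_{j}-z$, using $e^{\pm 2\pi\sqrt{-1}(w_{j}-z)}=e^{\pm 2\pi\sqrt{-1}w_{j}}y^{\mp 1}$, it converts the $W$-part into $\prod_{j}\theta(\tau,w_{j}-z)$ multiplied by corrections governed by $\frac{y^{-1/2}(1-e^{-2\pi\sqrt{-1}w_{j}}y)}{2\sin(\pi(w_{j}-z))}=\sqrt{-1}e^{-\pi\sqrt{-1}w_{j}}$.

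Summing the one-index exponentials produced by these two identities gives $\exp(\pi\sqrt{-1}\sum x_{i}-\pi\sqrt{-1}\sum w_{j})=\exp((c_{1}(M)-c_{1}(W))/2)$, which cancels precisely against the prefactor $\exp((c_{1}(W)-c_{1}(M))/2)$ in Definition 2.2. The residual powers of $c$ and $q$ from both sides total $c^{2(d-l)+d-l}q^{(d-l)/8}=c^{3(d-l)}q^{(d-l)/8}=\eta(\tau)^{3(d-l)}$, using $\eta(\tau)=q^{1/24}c$, which is exactly the claimed $\eta$-factor. The factor $\varphi(\tau)^{8}ch(\mathcal{V}_{\alpha})$ is then rewritten as $\tfrac{1}{2}(\prod_{\kappa}\theta_{1}(\tau,y_{\kappa}^{\alpha})+\prod_{\kappa}\theta_{2}(\tau,y_{\kappa}^{\alpha})+\prod_{\kappa}\theta_{3}(\tau,y_{\kappa}^{\alpha}))$ directly via (2.23), the $E_{2}$-exponential passes through unchanged, and taking the degree $2d$ component delivers the stated formula.

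The main obstacle is bookkeeping rather than conceptual: the asymmetric $q^{m-1}$ versus $q^{m}$ shift between $\wedge_{-yq^{m-1}}(W^{*})$ and $\wedge_{-y^{-1}q^{m}}(W)$ is precisely what supplies the extra factor $(1-e^{-2\pi\sqrt{-1}w_{j}}y)$ outside the $\prod_{m\ge 1}$-block needed to recognise $\theta(\tau,w_{j}-z)$, and one must verify carefully that the normalizations $c^{2(d-l)}$ and $y^{-l/2}$ inside $E(M,W,\tau,z)$ combine with the $q^{1/8}$, $c$, and $\sqrt{-1}$ corrections from both sides to leave the clean $\eta(\tau)^{3(d-l)}$ without residual scalar constants.
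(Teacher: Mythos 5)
The paper states Lemma 2.3 without proof, and your strategy --- substitute the Chern-root expressions (2.5)--(2.6) into Definition 2.2 and recognise the resulting infinite products as the theta products (2.7) --- is certainly the intended one. Your two elementary identities $\frac{2\sin(\pi x_{i})}{1-e^{-2\pi\sqrt{-1}x_{i}}}=-\sqrt{-1}\,e^{\pi\sqrt{-1}x_{i}}$ and $\frac{y^{-1/2}(1-e^{-2\pi\sqrt{-1}w_{j}}y)}{2\sin(\pi(w_{j}-z))}=\sqrt{-1}\,e^{-\pi\sqrt{-1}w_{j}}$ are both correct, as is the bookkeeping of the powers of $c$ and $q^{1/8}$ that assembles $\eta(\tau)^{3(d-l)}$. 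The gap is in the one step you explicitly defer: the assertion that the remaining unimodular corrections cancel ``without residual scalar constants.'' They do not. After the exponentials $e^{\pi\sqrt{-1}\sum_{i}x_{i}-\pi\sqrt{-1}\sum_{j}w_{j}}$ have been cancelled against the prefactor $e^{(c_{1}(W)-c_{1}(M))/2}$, the product of your corrections over all indices leaves the constant $(-\sqrt{-1})^{d}(\sqrt{-1})^{l}=(-\sqrt{-1})^{d-l}$, which equals $1$ only when $d\equiv l\pmod 4$.

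Concretely, for $d=1$, $l=0$ one has $e^{-\pi\sqrt{-1}x_{1}}\cdot\frac{2\pi\sqrt{-1}x_{1}}{1-e^{-2\pi\sqrt{-1}x_{1}}}=\frac{2\pi\sqrt{-1}x_{1}}{2\sqrt{-1}\sin(\pi x_{1})}=-\sqrt{-1}\cdot\frac{2\pi\sqrt{-1}x_{1}}{2\sin(\pi x_{1})}$, so already the $q^{0}$-term of Definition 2.2 is $-\sqrt{-1}$ times the $q^{0}$-term of the right-hand side of (2.28). Your argument therefore actually establishes $Ell(M,W,\mathcal{V}_{\alpha},\tau,z)=(-\sqrt{-1})^{d-l}\{\cdots\}^{(2d)}$ rather than the identity exactly as stated, and there is no further source of $\sqrt{-1}$'s anywhere in the substitution that could restore the cancellation. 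Since $(-\sqrt{-1})^{d-l}$ is a constant independent of $\tau$ and $z$, nothing downstream (Theorem 2.4, the modularity of the $\overline{a_{n}}$, or the cancellation formulas, all of which are homogeneous in $Ell$) is affected; but to close the proof you must either carry this constant explicitly or record that the lemma holds up to the factor $(-\sqrt{-1})^{d-l}$.
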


\begin{thm}
The generalized elliptic genus $Ell(M, W, \mathcal{V}_{\alpha}, \tau, z)$ is a weak Jacobi form of weight $2d-l+4$ and index $l/2$ provided that $c_{1}(W)=0$ and the first Pontrjagin classes $p_{1}(W)=p_{1}(M).$
\end{thm}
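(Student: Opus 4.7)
My plan is to verify the two defining functional equations of a weak Jacobi form (Lemma 2.1) using the explicit theta-function expression of Lemma 2.2. Since $SL_{2}(\mathbf{Z})$ is generated by $S$ and $T$, the modular requirement reduces to the cases $\tau\mapsto\tau+1$ and $\tau\mapsto-1/\tau,\ z\mapsto z/\tau$; the elliptic transformation $z\mapsto z+\lambda\tau+\mu$ is checked separately.

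The $T$-transformation is straightforward: $E_{2}(\tau+1)=E_{2}(\tau)$, the eight phases from $\theta_{1}(\tau+1,y^{\alpha}_{\kappa})$ combine to $e^{2\pi\sqrt{-1}}=1$, the swap $\theta_{2}\leftrightarrow\theta_{3}$ leaves the symmetric sum $\frac{1}{2}(\prod\theta_{1}+\prod\theta_{2}+\prod\theta_{3})$ invariant, and the residual $e^{\pi\sqrt{-1}/4}$ phases from $\eta^{3(d-l)}$, $\prod_{i}1/\theta(\tau,x_{i})$ and $\prod_{j}\theta(\tau,w_{j}-z)$ cancel. For the $S$-transformation I apply (2.12)--(2.16) and the anomalous formula for $E_{2}$ factor by factor. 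This produces four kinds of extraneous pieces: (i) modular prefactors $(\tau/\sqrt{-1})^{\bullet}$ from $\eta^{3(d-l)}$ and every theta, whose exponents add to give the claimed modular weight after the final rescaling step; (ii) Gaussian phases $e^{\pm\pi\sqrt{-1}\tau x_{i}^{2}}$, $e^{\pm\pi\sqrt{-1}\tau w_{j}^{2}}$, $e^{\pm\pi\sqrt{-1}\tau(y^{\alpha}_{\kappa})^{2}}$ from the theta-transformation laws, together with the correction $\exp(-\frac{\sqrt{-1}\tau}{4\pi}\cdot\frac{1}{30}c_{2}(\mathcal{W}_{\alpha}))$ arising from the $-6\sqrt{-1}\tau/\pi$ term in $E_{2}(-1/\tau)$; (iii) a factor $e^{\pi\sqrt{-1}lz^{2}/\tau}$ together with a linear-in-$z$ phase $e^{-2\pi\sqrt{-1}z\sum_{j}w_{j}}$ arising from expanding $e^{\pi\sqrt{-1}\tau(w_{j}-z/\tau)^{2}}$ inside $\theta(-1/\tau,w_{j}-z/\tau)$; and (iv) a uniform rescaling $x_{i}\mapsto\tau x_{i}$, $w_{j}\mapsto\tau w_{j}$, $y^{\alpha}_{\kappa}\mapsto\tau y^{\alpha}_{\kappa}$ of the theta-arguments on the right-hand side. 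The hypotheses enter exactly here: the identity $\frac{1}{30}c_{2}(\mathcal{W}_{\alpha})=-\sum_{\kappa}(2\pi\sqrt{-1}y^{\alpha}_{\kappa})^{2}$ ensures that the $E_{2}$-anomaly cancels the $y^{\alpha}_{\kappa}$-Gaussian in (ii); $c_{1}(W)=0$ kills the linear-in-$z$ phase in (iii); and $p_{1}(W)=p_{1}(M)$, i.e. $\sum_{j}w_{j}^{2}=\sum_{i}x_{i}^{2}$, kills the residual exponential $e^{\pi\sqrt{-1}\tau(\sum w_{j}^{2}-\sum x_{i}^{2})}$ in (ii). Taking the top $(2d)$-form component then converts the uniform rescaling in (iv) into the scalar $\tau^{d}$ (the selected monomials have total variable-degree $d$), which combines with the prefactors from (i) and the $e^{\pi\sqrt{-1}lz^{2}/\tau}$ of (iii) to yield the stated modular weight together with the index $l/2$ (via $e^{\pi\sqrt{-1}lz^{2}/\tau}=\exp(2\pi\sqrt{-1}(l/2)z^{2}/\tau)$).

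The elliptic transformation is comparatively mechanical: $z$ appears only in $\prod_{j}\theta(\tau,w_{j}-z)$, and applying the quasi-periodicities $\theta(\tau,u+1)=-\theta(\tau,u)$ and $\theta(\tau,u+\tau)=-q^{-1/2}e^{-2\pi\sqrt{-1}u}\theta(\tau,u)$ to $z\mapsto z+\lambda\tau+\mu$ produces exactly $(-1)^{l(\lambda+\mu)}\exp(-\pi\sqrt{-1}l(\lambda^{2}\tau+2\lambda z))$ provided $\sum_{j}w_{j}=0$, i.e. $c_{1}(W)=0$. I expect the main obstacle to be the careful bookkeeping in the $S$-transformation: the normalizing factor $\exp(\frac{1}{24}E_{2}(\tau)\cdot\frac{1}{30}c_{2}(\mathcal{W}_{\alpha}))$ in the definition of $Ell$ is precisely the counterterm engineered so that the $E_{2}$-anomaly absorbs the $y^{\alpha}_{\kappa}$-Gaussian coming from the $\theta_{a}$-factors, while the topological constraint $p_{1}(W)=p_{1}(M)$ plays the parallel role for the $x_{i}$- and $w_{j}$-Gaussians; without either the counterterm or the constraint, $Ell$ would retain a nontrivial modular anomaly and fail to be a Jacobi form.
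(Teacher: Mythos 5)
Your overall strategy is identical to the paper's: verify the two functional equations of Lemma 2.1 on the generators $S,T$ of $SL_{2}(\mathbf{Z})$ and on the lattice translations $z\mapsto z+\lambda\tau+\mu$, starting from the theta-product expression of Lemma 2.3. Your accounting of where each hypothesis enters (the $E_{2}$-counterterm cancelling the $y^{\alpha}_{\kappa}$-Gaussian via $\sum_{\kappa}(2\pi\sqrt{-1}y^{\alpha}_{\kappa})^{2}=-\tfrac{1}{30}c_{2}(\mathcal{W}_{\alpha})$; $c_{1}(W)=0$ killing the linear-in-$z$ cross term and controlling the elliptic shifts; $p_{1}(W)=p_{1}(M)$ killing $e^{\pi\sqrt{-1}\tau(\sum_{j}w_{j}^{2}-\sum_{i}x_{i}^{2})}$; the factor $e^{\pi\sqrt{-1}lz^{2}/\tau}$ giving index $l/2$) is correct and is considerably more explicit than the paper's proof, which simply asserts the transformation laws (2.32)--(2.35).

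However, the one step you do not actually carry out --- ``the exponents add to give the claimed modular weight'' --- is precisely where the argument fails to deliver the stated weight. Summing the powers of $\tau$ produced by your own scheme: $\eta^{3(d-l)}$ contributes $\tfrac{3(d-l)}{2}$; each of the $d$ factors $2\pi\sqrt{-1}x_{i}/\theta(\tau,x_{i})$ contributes $-\tfrac{1}{2}$ from the theta law and $-1$ from rewriting $x_{i}=\tau^{-1}(\tau x_{i})$, for a total of $-\tfrac{3d}{2}$; the $l$ factors $\theta(\tau,w_{j}-z)$ contribute $\tfrac{l}{2}$; the eight $E_{8}$-thetas contribute $4$; and extracting the degree-$2d$ component after the uniform rescaling contributes $d$ (a $2d$-form is a monomial of polynomial degree $d$ in the two-forms $x_{i},w_{j},y^{\alpha}_{\kappa}$, exactly as you say). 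The sum is
\[
\tfrac{3(d-l)}{2}-\tfrac{3d}{2}+\tfrac{l}{2}+4+d=d-l+4,
\]
not $2d-l+4$. This is consistent with the weight $d-l$ obtained in \cite{L2} for the genus without the $E_{8}$ factor (the $E_{8}$ theta adds $4$), and with the Witten genus of a $2d$-real-dimensional string manifold being modular of weight $d$. So either you must exhibit an additional factor $\tau^{d}$ that this bookkeeping misses (I do not see one), or the exponent in the statement --- and in the paper's own equation (2.35), which is likewise asserted without computation --- should read $d-l+4$. Since the case analysis of Section 3 keys on the value of $2d-l$, you should resolve this discrepancy explicitly rather than asserting the stated exponent.
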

\begin{proof}
According to \cite{C}, we have
\begin{align}
&\eta\Big(-\frac{1}{\tau}\Big)^{3}=\Big(\frac{\tau}{\sqrt{-1}}\Big)^{\frac{3}{2}}\eta(\tau)^{3},~~~~ \eta(\tau+1)^{3}=e^{\frac{\pi\sqrt{-1}}{4}}\eta(\tau)^{3},\\
&\theta(\tau, z+1)=-\theta(\tau, z),~~~~ \theta(\tau, z+\tau)=-q^{-\frac{1}{2}}e^{-2\pi\sqrt{-1}z}\theta(\tau, z).
\end{align}
From Equation (2.28) it is easy to obtain
\begin{align}
&Ell(M, W, \mathcal{V}_{\alpha}, \tau+1, z)=Ell(M, W, \mathcal{V}_{\alpha}, \tau, z);\\
&Ell(M, W, \mathcal{V}_{\alpha}, \tau, z+1)=(-1)^{l}Ell(M, W, \mathcal{V}_{\alpha}, \tau, z);\\
&Ell(M, W, \mathcal{V}_{\alpha}, \tau, z+\tau)=(-1)^{l}\exp{(-\pi\sqrt{-1}l(\tau+2z))}Ell(M, W, \mathcal{V}_{\alpha}, \tau, z);\\
&Ell\Big(M, W, \mathcal{V}_{\alpha}, -\frac{1}{\tau}, \frac{z}{\tau}\Big)=\tau^{2d-l+4}\exp{\Big(\pi\sqrt{-1}l\frac{z^{2}}{\tau}\Big)}Ell(M, W, \mathcal{V}_{\alpha}, \tau, z).
\end{align}
By Lemma 2.1, we can get Theorem 2.4.
\end{proof}

\begin{defn}
The generalized elliptic genus of $(M, J)$ with respect to $W,$ which we denote by $Ell(M, W, \mathcal{V}_{\alpha}, \mathcal{V}_{\beta}, \tau, z)$ is defined by
\begin{align}
Ell(M, W, \mathcal{V}_{\alpha}, \mathcal{V}_{\beta}, \tau, z):=&\Big\{\exp{\Big(\frac{1}{24}E_{2}(\tau)\cdot\frac{1}{30}(c_{2}(\mathcal{W}_{\alpha})+c_{2}(\mathcal{W}_{\beta}))+\frac{c_{1}(W)-c_{1}(M)}{2}\Big)}\\
&Td(M)ch(E(M, W, \tau, z))\cdot\varphi(\tau)^{(16)}ch(\mathcal{V}_{\alpha})ch(\mathcal{V}_{\beta})\Big\}^{(2d)}.\nonumber
\end{align}
\end{defn}

\begin{lem}
We have
\begin{align}
&Ell(M, W, \mathcal{V}_{\alpha}, \mathcal{V}_{\beta}, \tau, z):=\Big\{\exp{\Big(\frac{1}{24}E_{2}(\tau)\cdot\frac{1}{30}(c_{2}(\mathcal{W}_{\alpha})+c_{2}(\mathcal{W}_{\beta}))\Big)}\eta(\tau)^{3(d-l)}\\
&\cdot\prod_{i=1}^{d}\frac{2\pi\sqrt{-1}x_{i}}{\theta(\tau, x_{i})}\prod_{j=1}^{l}\theta(\tau, w_{j}-z)\cdot\frac{1}{4}\Big(\prod_{\kappa=1}^{8}\theta_{1}(\tau, y_{\kappa}^{\alpha})+\prod_{\kappa=1}^{8}\theta_{2}(\tau, y_{\kappa}^{\alpha})+\prod_{\kappa=1}^{8}\theta_{3}(\tau, y_{\kappa}^{\alpha})\Big)\nonumber\\
&\cdot\Big(\prod_{\kappa=1}^{8}\theta_{1}(\tau, y_{\kappa}^{\beta})+\prod_{\kappa=1}^{8}\theta_{2}(\tau, y_{\kappa}^{\beta})+\prod_{\kappa=1}^{8}\theta_{3}(\tau, y_{\kappa}^{\beta})\Big)\Big\}^{(2d)}.\nonumber
\end{align}
\end{lem}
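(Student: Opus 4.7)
The plan is to establish (2.37) by direct substitution, mirroring the calculation that yields Lemma 2.3 and carrying an extra copy of the $E_{8}$ factor through. Comparing Definition 2.5 with Definition 2.2, the only new ingredients are the second $c_{2}(\mathcal{W}_{\beta})$ summand inside the $E_{2}$-exponential and the extra factor $\varphi(\tau)^{(8)}ch(\mathcal{V}_{\beta})$. The strategy is therefore to recycle the computation behind Lemma 2.3 for the tangent/$W$-part and to invoke identity (2.23) twice for the two gauge-bundle factors.

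First I would rewrite the tangent/$W$ piece of Definition 2.5. Using the Chern-root formulas (2.5)--(2.6) for $ch(\wedge_{q})$ and $ch(S_{q})$, together with (2.26), the Chern character of $E(M,W,\tau,z)$ expands into an infinite product in $e^{\pm 2\pi\sqrt{-1}x_{i}}$, $e^{\pm 2\pi\sqrt{-1}w_{j}}$, $y$, $q$. Multiplying this product by the Todd form (2.29) and by the exponential $\exp((c_{1}(W)-c_{1}(M))/2)$, and matching against the Jacobi triple-product expansion (2.7) of $\theta(\tau,\cdot)$, one recovers --- after absorbing the $q^{1/8}$ and the $e^{\pi\sqrt{-1}(\cdot)}$ factors that account for the difference between the $(1-e^{-2\pi\sqrt{-1}(\cdot)})$ and $\sin$ forms --- exactly
$$\eta(\tau)^{3(d-l)}\prod_{i=1}^{d}\frac{2\pi\sqrt{-1}x_{i}}{\theta(\tau,x_{i})}\prod_{j=1}^{l}\theta(\tau,w_{j}-z).$$
This manipulation is verbatim the one already carried out in Lemma 2.3, so I would simply cite it.

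Next I would handle the two gauge factors. Equation (2.23) gives
$$\varphi(\tau)^{(8)}ch(\mathcal{V}_{\alpha})=\tfrac{1}{2}\Big(\prod_{\kappa=1}^{8}\theta_{1}(\tau,y_{\kappa}^{\alpha})+\prod_{\kappa=1}^{8}\theta_{2}(\tau,y_{\kappa}^{\alpha})+\prod_{\kappa=1}^{8}\theta_{3}(\tau,y_{\kappa}^{\alpha})\Big),$$
and the analogous identity with $\beta$ in place of $\alpha$. Multiplying the two produces the $\tfrac{1}{4}(\sum)(\sum)$ structure that appears on the right-hand side of (2.37), while the exponential prefactor $\exp\!\big(\tfrac{1}{24}E_{2}(\tau)\cdot\tfrac{1}{30}(c_{2}(\mathcal{W}_{\alpha})+c_{2}(\mathcal{W}_{\beta}))\big)$ is simply copied over from Definition 2.5 and picks up the additional $c_{2}(\mathcal{W}_{\beta})$ term because $\mathcal{V}_{\alpha}$ and $\mathcal{V}_{\beta}$ are independent $E_{8}$ bundles to which (2.24) applies separately.

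Combining the two pieces and taking the $(2d)$-form component then gives (2.37). The main --- and only --- obstacle is the infinite-product bookkeeping used in the first step (the conversion of the Todd and exterior/symmetric power products into $\theta$-function ratios), but this is identical to the one-$E_{8}$ case settled in Lemma 2.3; once that identity is in hand, the addition of the second $E_{8}$ bundle is purely formal because $\mathcal{V}_{\alpha}$ and $\mathcal{V}_{\beta}$ enter multiplicatively in $ch$ and additively in $c_{2}$.
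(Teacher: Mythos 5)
Your proposal is correct and is precisely the argument the paper leaves implicit: both Lemma 2.3 and Lemma 2.6 are stated without proof, and the intended verification is exactly your decomposition, namely the Jacobi-triple-product conversion of $\exp\big(\tfrac{c_{1}(W)-c_{1}(M)}{2}\big)Td(M)\,ch(E(M,W,\tau,z))$ into $\eta(\tau)^{3(d-l)}\prod_{i}\frac{2\pi\sqrt{-1}x_{i}}{\theta(\tau,x_{i})}\prod_{j}\theta(\tau,w_{j}-z)$ already used for Lemma 2.3, combined with two separate applications of the identity $\varphi(\tau)^{8}ch(\mathcal{V}_{\gamma})=\tfrac{1}{2}\sum_{\nu}\prod_{\kappa}\theta_{\nu}(\tau,y_{\kappa}^{\gamma})$ (one for $\gamma=\alpha$, one for $\gamma=\beta$, using $\varphi^{16}=\varphi^{8}\cdot\varphi^{8}$) to produce the $\tfrac{1}{4}(\,\cdot\,)(\,\cdot\,)$ factor. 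No gap; the only caveat is that your citations of equation numbers are slightly off (the $E_{8}$ theta identity is (2.21) and the $c_{2}$ relation is (2.22) in the paper's numbering), which does not affect the argument.
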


\begin{thm}
The generalized elliptic genus $Ell(M, W, \mathcal{V}_{\alpha}, \mathcal{V}_{\beta}, \tau, z)$ is a weak Jacobi form of weight $2d-l+8$ and index $l/2$ provided that $c_{1}(W)=0$ and the first Pontrjagin classes $p_{1}(W)=p_{1}(M).$
\end{thm}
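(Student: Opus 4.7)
My approach is to mirror the proof of Theorem 2.4: starting from the theta-function factorization in Lemma 2.6, I verify the four functional equations demanded by Lemma 2.1 and then invoke that lemma. The only structural difference from Theorem 2.4 is the appearance of a second $E_{8}$ theta product $\theta_{1}^{8}+\theta_{2}^{8}+\theta_{3}^{8}$ evaluated at the $y_{\kappa}^{\beta}$, matched in the prefactor by the additional term $E_{2}(\tau)c_{2}(\mathcal{W}_{\beta})/(24\cdot 30)$, so most of the bookkeeping is a verbatim repetition of the Theorem 2.4 argument.

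The lattice transformations $z\mapsto z+1$ and $z\mapsto z+\tau$ touch only the factor $\prod_{j=1}^{l}\theta(\tau,w_{j}-z)$, since neither $E_{8}$ factor carries $z$-dependence; the quasi-periodicity of $\theta$ together with $c_{1}(W)=0$ (which forces $\sum_{j}w_{j}=0$) gives, word for word as in Theorem 2.4, the index-$l/2$ laws required by Lemma 2.1. The $T$-action $\tau\mapsto\tau+1$ is also routine: $E_{2}$ is $T$-invariant by (2.17); the eighth-root phases from $\eta^{3(d-l)}$, the $d$ denominator thetas, and the $l$ numerator thetas cancel to $e^{\pi\sqrt{-1}((d-l)/4-d/4+l/4)}=1$; and each of the two $E_{8}$ sums is $T$-invariant because $\prod_{\kappa=1}^{8}\theta_{1}(\tau+1,y_{\kappa})=e^{2\pi\sqrt{-1}}\prod_{\kappa=1}^{8}\theta_{1}(\tau,y_{\kappa})$ while $\theta_{2}$ and $\theta_{3}$ are interchanged.

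The main obstacle, and the only place where the second $E_{8}$ factor truly enters, is the $S$-action $(\tau,z)\mapsto(-1/\tau,z/\tau)$. Applying the $S$-laws for $\eta^{3}$, $\theta$, and $\theta_{1},\theta_{2},\theta_{3}$ factor by factor, each theta produces a $(\tau/\sqrt{-1})^{1/2}$, a quadratic exponential in its argument, and an argument shift $z\mapsto\tau z$ together with a permutation of $\theta_{1},\theta_{2},\theta_{3}$ that leaves each $E_{8}$ sum invariant. Summing the half-integer exponents yields $(\tau/\sqrt{-1})^{d-l+8}$---the extra $+4$ compared to Theorem 2.4 comes from the eight thetas of the $\beta$-product---and the argument rescalings $x\mapsto\tau x$, $w\mapsto\tau w$, $y^{\alpha}\mapsto\tau y^{\alpha}$, $y^{\beta}\mapsto\tau y^{\beta}$ contribute an additional $\tau^{d}$ when the top form $\{\cdot\}^{(2d)}$ of total polynomial degree $d$ in the Chern roots is extracted, giving the desired weight $\tau^{2d-l+8}$. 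The exponential anomalies group into: a Todd/$W$ piece $\exp(\pi\sqrt{-1}\tau(\sum x_{i}^{2}-\sum(w_{j}-z/\tau)^{2}))$ which, after expanding the square and using $\sum w_{j}=0$, reduces to $\exp(\pi\sqrt{-1}lz^{2}/\tau)$ (the correct index factor) times a $\sum x_{i}^{2}-\sum w_{j}^{2}$ piece killed by the hypothesis $p_{1}(W)=p_{1}(M)$; and two $E_{8}$ pieces $\exp(\pi\sqrt{-1}\tau\sum_{\kappa}(y_{\kappa}^{\alpha})^{2})$ and $\exp(\pi\sqrt{-1}\tau\sum_{\kappa}(y_{\kappa}^{\beta})^{2})$, each of which, via (2.22), is a multiple of $\tau\cdot c_{2}(\mathcal{W})$ precisely cancelled by the $-6\sqrt{-1}\tau/\pi$ term in $E_{2}(-1/\tau)$ from (2.18) acting on the corresponding prefactor $\exp(E_{2}(\tau)c_{2}(\mathcal{W})/(24\cdot 30))$. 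Because the two $E_{8}$ bundles enter the prefactor additively in $c_{2}(\mathcal{W}_{\alpha})+c_{2}(\mathcal{W}_{\beta})$, the $\beta$-cancellation is formally identical to the $\alpha$-cancellation.

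Assembling the four resulting functional equations and invoking Lemma 2.1 then concludes that $Ell(M,W,\mathcal{V}_{\alpha},\mathcal{V}_{\beta},\tau,z)$ is a weak Jacobi form of weight $2d-l+8$ and index $l/2$. The only genuinely delicate point is the doubled $E_{2}$-anomaly cancellation under $S$, but the additive structure of the $c_{2}$ prefactor makes this transparent.
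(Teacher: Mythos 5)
Your proposal follows the same route as the paper: it establishes the four functional equations (2.38)--(2.41) from the theta-factorization of Lemma 2.6 and then invokes Lemma 2.1, which is exactly how the paper argues (by repeating the proof of Theorem 2.4), with the second $E_{8}$ factor absorbed through the additive $c_{2}(\mathcal{W}_{\alpha})+c_{2}(\mathcal{W}_{\beta})$ prefactor just as you describe. The only blemish is a sign slip in your intermediate Todd/$W$ anomaly: the factors $e^{\pi\sqrt{-1}\tau x_{i}^{2}}$ arise from thetas in the denominator and the factors $e^{\pi\sqrt{-1}\tau(w_{j}-z/\tau)^{2}}$ from thetas in the numerator, so the exponent should be $\pi\sqrt{-1}\tau\big(-\sum_{i} x_{i}^{2}+\sum_{j}(w_{j}-z/\tau)^{2}\big)$; this does not affect your (correctly stated) conclusion that the $z$-dependence reduces to $\exp(\pi\sqrt{-1}lz^{2}/\tau)$ and the rest is killed by $p_{1}(W)=p_{1}(M)$.
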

\begin{proof}
We obtain that
\begin{align}
&Ell(M, W, \mathcal{V}_{\alpha}, \mathcal{V}_{\beta}, \tau+1, z)=Ell(M, W, \mathcal{V}_{\alpha}, \mathcal{V}_{\beta}, \tau, z);\\
&Ell(M, W, \mathcal{V}_{\alpha}, \mathcal{V}_{\beta}, \tau, z+1)=(-1)^{l}Ell(M, W, \mathcal{V}_{\alpha}, \mathcal{V}_{\beta}, \tau, z);\\
&Ell(M, W, \mathcal{V}_{\alpha}, \mathcal{V}_{\beta}, \tau, z+\tau)=(-1)^{l}\exp{(-\pi\sqrt{-1}l(\tau+2z))}Ell(M, W, \mathcal{V}_{\alpha}, \mathcal{V}_{\beta}, \tau, z);\\
&Ell\Big(M, W, \mathcal{V}_{\alpha}, \mathcal{V}_{\beta}, -\frac{1}{\tau}, \frac{z}{\tau}\Big)=\tau^{2d-l+8}\exp{\Big(\pi\sqrt{-1}l\frac{z^{2}}{\tau}\Big)}Ell(M, W, \mathcal{V}_{\alpha}, \mathcal{V}_{\beta}, \tau, z).
\end{align}
Similar to Theorem 2.4, we have Theorem 2.7.
\end{proof}

\section{ Anomaly cancellation formulas for the gauge group $E_{8}$ }

Recall that the Eisenstein series $G_{2k}(\tau)$ are defined to be
\begin{align}
G_{2k}(\tau):=-\frac{B_{2k}}{4k}+\sum_{n=0}^{\infty}\sigma_{2k-1}(n)\cdot q^{n},
\end{align}
where $\sigma_{k}(n):=\sum_{t>0,t\mid n} t^{k}$ and $B_{2k}$ are the Bernoulli numbers.
It is well known that the whole grading ring of modular forms over $SL_{2}(\mathbf{Z})$ are generated by $G_{4}(\tau)$ and $G_{6}(\tau).$

\begin{lem}(\cite{L2,W})
Suppose a function $\phi(\tau, z):\mathcal{H}\times \mathcal{C}\rightarrow\mathcal{C}$ satisfies
\begin{align}
\phi\Big(\frac{a\tau+b}{c\tau+d_{0}}, \frac{z}{c\tau+d_{0}}\Big)=(c\tau+d_{0})^{k}\exp{(\frac{2\pi\sqrt{-1}tcz^{2}}{c\tau+d_{0}})}\phi(\tau, z), ~\Big(\begin{array}{cc}
a & b\\
c & d_{0}
\end{array}\Big)\in SL_{2}(\mathbf{Z}).
\end{align}
We define 
\begin{align}
\Phi(\tau, z):=\exp{(-8\pi^{2}tG_{2}(\tau)z^{2})}\phi(\tau, z):=\sum_{n\geq 0}a_{n}(\tau)\cdot z^{n},
\end{align}
then these $a_{n}(\tau)$ are modular forms of weight $k+n$ over $SL_{2}(\mathbf{Z}).$
\end{lem}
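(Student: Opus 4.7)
The plan is to reduce the Jacobi transformation law of $\phi$ to a \emph{purely} modular weight-$k$ transformation of $\Phi$ by exploiting the quasi-modularity of $G_2$. The factor $\exp(-8\pi^2 t G_2(\tau) z^2)$ is designed precisely so that, under an $SL_2(\mathbf{Z})$ action, its anomalous piece cancels the Jacobi exponential $\exp(2\pi\sqrt{-1}tcz^2/(c\tau+d_0))$ carried by $\phi$. Once this cancellation is established, matching Taylor coefficients in $z$ on both sides of the transformed identity will immediately pin down the weight of each $a_n(\tau)$.

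First, I would translate (2.16) into the corresponding transformation law for $G_2$. Since $G_2 = -E_2/24$ (by comparing the constant $-B_2/4 = -1/24$ with the $q$-expansion in (2.19)), the transformation of $E_2$ yields
\[ G_2\!\left(\frac{a\tau+b}{c\tau+d_0}\right) = (c\tau+d_0)^2 G_2(\tau) + \frac{\sqrt{-1}\,c(c\tau+d_0)}{4\pi}. \]
Next I would substitute $\gamma\tau = (a\tau+b)/(c\tau+d_0)$ and $z \mapsto z/(c\tau+d_0)$ into the definition of $\Phi$, insert this $G_2$ law together with the hypothesis on $\phi$, and verify that the anomaly contributed by $G_2$ gives exactly $-2\pi\sqrt{-1}tcz^2/(c\tau+d_0)$, cancelling the Jacobi exponential of $\phi$. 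This leaves the clean identity
\[ \Phi\!\left(\frac{a\tau+b}{c\tau+d_0}, \frac{z}{c\tau+d_0}\right) = (c\tau+d_0)^k \Phi(\tau, z). \]

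With this identity in hand, I would expand $\Phi(\tau,z) = \sum_{n\geq 0} a_n(\tau) z^n$ on both sides and equate coefficients of $z^n$: the left side contributes $a_n(\gamma\tau)(c\tau+d_0)^{-n}$, so matching yields $a_n(\gamma\tau) = (c\tau+d_0)^{k+n} a_n(\tau)$, which is precisely the weight-$(k+n)$ modular transformation. Holomorphy in $\tau$ and the correct cusp behaviour of each $a_n$ are inherited from the joint holomorphy and convergent $q$-expansions of $\phi$ and $G_2$. The main obstacle is not conceptual but arithmetic: one must verify that the specific constant $-8\pi^2 t$ in the definition of $\Phi$ produces exactly the right correction to offset the quasi-modular anomaly of $G_2$. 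Any other coefficient would leave a residual exponential factor and the $a_n$ would, at best, be quasimodular rather than genuinely modular.
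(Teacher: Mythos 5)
Your proposal is correct: the constant in $\exp(-8\pi^2 t G_2(\tau)z^2)$ is indeed calibrated so that, via $G_2=-E_2/24$ and the quasi-modular law (2.16), the anomalous term becomes $-2\pi\sqrt{-1}tcz^2/(c\tau+d_0)$ and exactly cancels the Jacobi exponential, after which comparing coefficients of $z^n$ gives the weight $k+n$ transformation. The paper itself states this lemma without proof, citing \cite{L2,W} (ultimately Gritsenko \cite{G}), and your argument is precisely the standard one used there.
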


\begin{prop}
Let $c_{1}(W)=c_{1}(M)=0$ and the first Pontrjagin classes $p_{1}(W)=p_{1}(M),$ then the series $\overline{a_{n}}(M, W, \tau)$ determined by
\begin{align}
\exp{(-4\pi^{2}lG_{2}(\tau)z^{2})}Ell(M, W, \mathcal{V}_{\alpha}, \tau, z)=\sum_{n\geq 0}\overline{a_{n}}(M, W, \tau)\cdot z^{n},
\end{align}
are modular forms of weight $2d-l+4+n$ over $SL_{2}(\mathbf{Z}).$ 
Furthermore, the first five series of $\overline{a_{n}}(M, W, \tau)$ are of the following form:
\begin{align}
&\overline{a_{0}}(M, W, \tau)=\exp{\Big(\frac{1}{24}\cdot\frac{1}{30}c_{2}(\mathcal{W}_{\alpha})\Big)}\{Td(M)ch(\wedge_{-1}(W^{*}))\}^{(2d)}+\exp{\Big(\frac{1}{24}\cdot\frac{1}{30}c_{2}(\mathcal{W}_{\alpha})\Big)}\\
&\cdot\Big\{Td(M)\Big(-8-\frac{1}{30}c_{2}(\mathcal{W}_{\alpha})+ch(\mathcal{W}_{\alpha})\Big)ch(\wedge_{-1}(W^{*}))+Td(M)ch(\wedge_{-1}(W^{*}))ch(A_{1})\Big\}^{(2d)}q\nonumber\\
&+\exp{\Big(\frac{1}{24}\cdot\frac{1}{30}c_{2}(\mathcal{W}_{\alpha})\Big)}\Big\{Td(M)\Big(20+\frac{1}{6}c_{2}(\mathcal{W}_{\alpha})+\frac{1}{1800}c_{2}(\mathcal{W}_{\alpha})^{2}+ch(\overline{\mathcal{W}_{\alpha}})-8ch(\mathcal{W}_{\alpha})\nonumber\\
&-\frac{1}{30}c_{2}(\mathcal{W}_{\alpha})ch(\mathcal{W}_{\alpha})\Big)ch(\wedge_{-1}(W^{*}))+Td(M)\Big(-8-\frac{1}{30}c_{2}(\mathcal{W}_{\alpha})+ch(\mathcal{W}_{\alpha})\Big)ch(\wedge_{-1}(W^{*}))\nonumber\\
&\cdot ch(A_{1})+Td(M)ch(\wedge_{-1}(W^{*}))ch(A_{2})\Big\}^{(2d)}q^{2}+\cdot\cdot\cdot,\nonumber
\end{align}
where
\begin{align}
A_{1}&=-W^{*}-W+T^{*}+T-2(d-l),\\
A_{2}&=-W^{*}-W+\wedge^{2}(W^{*})+\wedge^{2}(W)+W^{*}\otimes W-W^{*}\otimes T^{*}\\
&-W^{*}\otimes T-W\otimes T^{*}-W\otimes T+T^{*}+T+S^{2}(T^{*})+S^{2}(T)\nonumber\\
&+T^{*}\otimes T+(d-l)(2d-2l-3);\nonumber
\end{align}
\begin{align}
&\overline{a_{1}}(M, W, \tau)=\exp{\Big(\frac{1}{24}\cdot\frac{1}{30}c_{2}(\mathcal{W}_{\alpha})\Big)}\Big\{2\pi\sqrt{-1}Td(M)ch\Big(\sum_{\rho=0}^{l}(-1)^{\rho}\Big(\rho-\frac{l}{2}\Big)\wedge^{\rho}(W^{*})\Big)\Big\}^{(2d)}\\
&+\exp{\Big(\frac{1}{24}\cdot\frac{1}{30}c_{2}(\mathcal{W}_{\alpha})\Big)}\Big\{2\pi\sqrt{-1}Td(M)\Big(-8-\frac{1}{30}c_{2}(\mathcal{W}_{\alpha})+ch(\mathcal{W}_{\alpha})\Big)ch\Big(\sum_{\rho=0}^{l}(-1)^{\rho}\nonumber\\
&\cdot\Big(\rho-\frac{l}{2}\Big)\wedge^{\rho}(W^{*})\Big)+Td(M)ch(A_{3})\Big\}^{(2d)}q+\cdot\cdot\cdot,\nonumber
\end{align}
where
\begin{align}
A_{3}&=2\pi\sqrt{-1}\sum_{\rho=0}^{l}(-1)^{\rho}\rho\wedge^{\rho}(W^{*})\otimes[-W^{*}-W+T^{*}+T-2(d-l)]\\
&+\sum_{\rho=0}^{l}(-1)^{\rho}\wedge^{\rho}(W^{*})\otimes[-l\pi\sqrt{-1}(-W^{*}-W+T^{*}+T-2(d-l))\nonumber\\
&-2\pi\sqrt{-1}(W^{*}-W)];\nonumber
\end{align}
\begin{align}
&\overline{a_{2}}(M, W, \tau)=\exp{\Big(\frac{1}{24}\cdot\frac{1}{30}c_{2}(\mathcal{W}_{\alpha})\Big)}\Big\{-2\pi^{2}Td(M)ch\Big(\sum_{\rho=0}^{l}(-1)^{\rho}\Big(\rho-\frac{l}{2}\Big)^{2}\wedge^{\rho}(W^{*})\Big)\Big\}^{(2d)}\\
&+\exp{\Big(\frac{1}{24}\cdot\frac{1}{30}c_{2}(\mathcal{W}_{\alpha})\Big)}\Big\{\frac{l}{6}\pi^{2}Td(M)ch\Big(\sum_{\rho=0}^{l}(-1)^{\rho}\wedge^{\rho}(W^{*})\Big)\Big\}^{(2d)}+\cdot\cdot\cdot;\nonumber
\end{align}
\begin{align}
&\overline{a_{3}}(M, W, \tau)=\exp{\Big(\frac{1}{24}\cdot\frac{1}{30}c_{2}(\mathcal{W}_{\alpha})\Big)}\Big\{-\frac{4}{3}\pi^{3}\sqrt{-1}Td(M)ch\Big(\sum_{\rho=0}^{l}(-1)^{\rho}\Big(\rho-\frac{l}{2}\Big)^{3}\\
&\cdot\wedge^{\rho}(W^{*})\Big)\Big\}^{(2d)}+\exp{\Big(\frac{1}{24}\cdot\frac{1}{30}c_{2}(\mathcal{W}_{\alpha})\Big)}\Big\{\frac{l}{3}\pi^{3}\sqrt{-1}Td(M)ch\Big(\sum_{\rho=0}^{l}(-1)^{\rho}\Big(\rho-\frac{l}{2}\Big)\nonumber\\
&\cdot\wedge^{\rho}(W^{*})\Big)\Big\}^{(2d)}+\cdot\cdot\cdot;\nonumber
\end{align}
\begin{align}
&\overline{a_{4}}(M, W, \tau)=\exp{\Big(\frac{1}{24}\cdot\frac{1}{30}c_{2}(\mathcal{W}_{\alpha})\Big)}\Big\{\frac{2}{3}\pi^{4}Td(M)ch\Big(\sum_{\rho=0}^{l}(-1)^{\rho}\Big(\rho-\frac{l}{2}\Big)^{4}\wedge^{\rho}(W^{*})\Big)\Big\}^{(2d)}\\
&+\exp{\Big(\frac{1}{24}\cdot\frac{1}{30}c_{2}(\mathcal{W}_{\alpha})\Big)}\Big\{-\frac{l}{3}\pi^{4}Td(M)ch\Big(\sum_{\rho=0}^{l}(-1)^{\rho}\Big(\rho-\frac{l}{2}\Big)^{2}\wedge^{\rho}(W^{*})\Big)\Big\}^{(2d)}\nonumber\\
&+\exp{\Big(\frac{1}{24}\cdot\frac{1}{30}c_{2}(\mathcal{W}_{\alpha})\Big)}\Big\{\frac{l^{2}}{72}\pi^{4}Td(M)ch\Big(\sum_{\rho=0}^{l}(-1)^{\rho}\wedge^{\rho}(W^{*})\Big)\Big\}^{(2d)}+\cdot\cdot\cdot.\nonumber
\end{align}
\end{prop}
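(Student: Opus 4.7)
The plan is to split the argument into two largely independent parts: the modularity of each $\overline{a_n}(M,W,\tau)$, which is nearly immediate from Theorem~2.4 combined with Lemma~3.1, and the computation of the explicit formulas for $\overline{a_0},\ldots,\overline{a_4}$, which is a joint $q$- and $z$-expansion.

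For modularity, I would invoke Theorem~2.4: under the hypotheses $c_1(W)=c_1(M)=0$ and $p_1(W)=p_1(M)$, the elliptic genus $Ell(M,W,\mathcal{V}_\alpha,\tau,z)$ is a weak Jacobi form of weight $k=2d-l+4$ and index $t=l/2$. Since $-8\pi^2 t = -4\pi^2 l$, the product $\exp(-4\pi^2 l G_2(\tau)z^2)\,Ell(M,W,\mathcal{V}_\alpha,\tau,z)$ is exactly the normalization $\Phi(\tau,z)$ of Lemma~3.1, so its $z$-Taylor coefficients $\overline{a_n}(M,W,\tau)$ are modular forms of weight $k+n = 2d-l+4+n$ over $SL_2(\mathbf{Z})$.

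For the explicit form of the first five coefficients, I would start from Definition~2.2 and expand the three factors
\[
\exp\Big(\tfrac{1}{24}E_2(\tau)\cdot\tfrac{1}{30}c_2(\mathcal{W}_\alpha)\Big),\quad Td(M)\,ch(E(M,W,\tau,z)),\quad \varphi(\tau)^{(8)}ch(\mathcal{V}_\alpha)
\]
separately in $q$. The first factor expands via $E_2(\tau)=1-24q-72q^2-\cdots$ to yield the constant global prefactor $\exp(\tfrac{1}{24}\cdot\tfrac{1}{30}c_2(\mathcal{W}_\alpha))$ and a $-\tfrac{1}{30}c_2(\mathcal{W}_\alpha)\,q$ correction. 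The third factor is the $E_8$-bundle series; its $q$-coefficients are $1$, $-8+ch(\mathcal{W}_\alpha)$, and $20+ch(\overline{\mathcal{W}_\alpha})-8\,ch(\mathcal{W}_\alpha)$, obtained from the Euler product for $\varphi(\tau)^8$ and the definition $\mathcal{V}_\alpha = \sum_n (P_\alpha\times_{\rho_n}V_n)q^n$. The middle factor is the most intricate: at $y=1$, its $q^0$-coefficient is $\wedge_{-1}(W^*)$; its $q^1$-coefficient collects $-2(d-l)$ from $c^{2(d-l)}$, $-W^*$ from the $m=2$ copy $\wedge_{-yq}(W^*)$, $-W$ from $\wedge_{-y^{-1}q}(W)$, and $T^*+T$ from $S_q(T^*)\otimes S_q(T)$, assembling to exactly $\wedge_{-1}(W^*)\otimes A_1$; the $q^2$-coefficient is computed analogously and yields $\wedge_{-1}(W^*)\otimes A_2$.

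To reach the $z$-dependence I would expand $y^{\rho-l/2}$ around $z=0$ via $y=e^{2\pi\sqrt{-1}z}$, producing the Taylor coefficients $\frac{(2\pi\sqrt{-1})^n}{n!}\sum_{\rho=0}^{l}(-1)^\rho(\rho-l/2)^n\wedge^\rho(W^*)$ that appear in $\overline{a_1},\overline{a_2},\overline{a_3},\overline{a_4}$. The Gaussian factor $\exp(-4\pi^2 l G_2(\tau)z^2)=1-4\pi^2 l G_2(\tau)z^2+\cdots$ contributes only to even powers of $z$, and combined with $G_2(\tau)=-\tfrac{1}{24}+q+\cdots$ yields the $\tfrac{l\pi^2}{6}$, $\tfrac{l\pi^3\sqrt{-1}}{3}$, and $\tfrac{l^2\pi^4}{72}$ corrections visible in $\overline{a_2},\overline{a_3},\overline{a_4}$. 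The main obstacle is bookkeeping: tracking precisely which pairs of $q$- and $z$-orders produce each displayed term, particularly the $q^2$-term of $\overline{a_0}$ involving $A_2$ together with the $q$-contributions of the exponential prefactor and of $\varphi(\tau)^8 ch(\mathcal{V}_\alpha)$, and then using the identity $\sum_{\kappa=1}^{8}(2\pi\sqrt{-1}y^\alpha_\kappa)^2=-\tfrac{1}{30}c_2(\mathcal{W}_\alpha)$ to eliminate the formal roots $y^\alpha_\kappa$ from all intermediate expressions while keeping the degree-$2d$ truncation $\{\cdot\}^{(2d)}$ until the very end.
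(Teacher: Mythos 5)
Your proposal is correct and follows essentially the same route as the paper: modularity of the $\overline{a_n}$ comes from combining Theorem~2.4 with Lemma~3.1 (with $t=l/2$, so $-8\pi^2 t=-4\pi^2 l$), and the explicit coefficients are obtained by the same joint $q$- and $z$-expansion — the paper writes $Ell=\sum\overline{B_i}(z)q^i$ and $\exp(-4\pi^2 lG_2(\tau)z^2)=\sum C_i(z)q^i$ and multiplies, which is exactly your bookkeeping of the three factors and the Gaussian normalization. Your identification of the individual $q$-coefficients ($\varphi^8\,ch(\mathcal{V}_\alpha)=1+(-8+ch(\mathcal{W}_\alpha))q+\cdots$, the $E_2$-exponential corrections, and $A_1$, $A_2$ from $E(M,W,\tau,0)$) matches the paper's computation.
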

\begin{proof}
We check at once that
\begin{align}
&\overline{a_{0}}(M, W, \tau)=Ell(M, W, \mathcal{V}_{\alpha}, \tau, 0).
\end{align}
For $c_{1}(W)=c_{1}(M)=0$ and $p_{1}(W)=p_{1}(M),$ a straightforward calculation shows that
\begin{align}
Ell(M, W, \mathcal{V}_{\alpha}, \tau, 0)=&\Big\{\exp{\Big(\frac{1}{24}E_{2}(\tau)\cdot\frac{1}{30}c_{2}(\mathcal{W}_{\alpha})\Big)}Td(M)ch(E(M, W, \tau, 0))\\
&\cdot\varphi(\tau)^{(8)}ch(\mathcal{V}_{\alpha})\Big\}^{(2d)},\nonumber
\end{align}
where
\begin{align}
E(M, W, \tau, 0)&=\wedge_{-1}(W^{*})\{1+[-W^{*}-W+T^{*}+T-2(d-l)]q+[-W^{*}-W+\wedge^{2}(W^{*})\\
&+\wedge^{2}(W)+W^{*}\otimes W-W^{*}\otimes T^{*}-W^{*}\otimes T-W\otimes T^{*}-W\otimes T+T^{*}+T\nonumber\\
&+S^{2}(T^{*})+S^{2}(T)+T^{*}\otimes T+(d-l)(2d-2l-3)]q^{2}+\cdot\cdot\cdot\}.\nonumber
\end{align}
Fix
\begin{align}
Ell(M, W, \mathcal{V}_{\alpha}, \tau, z):=\overline{B_{0}}(z)+\overline{B_{1}}(z)q+\overline{B_{2}}(z)q^{2}+\cdot\cdot\cdot,
\end{align}
and
\begin{align}
\exp{(-4\pi^{2}lG_{2}(\tau)z^{2})}:=C_{0}(z)+C_{1}(z)q+C_{2}(z)q^{2}+\cdot\cdot\cdot.
\end{align}
A long but straightforward calculation shows that
\begin{align}
&\overline{B_{0}}(z)=\exp{\Big(\frac{1}{24}\cdot\frac{1}{30}c_{2}(\mathcal{W}_{\alpha})\Big)}\Big\{Td(M)ch\Big(\sum_{\rho=0}^{l}(-1)^{\rho}\wedge^{\rho}(W^{*})\Big)\Big\}^{(2d)}\\
&+\exp{\Big(\frac{1}{24}\cdot\frac{1}{30}c_{2}(\mathcal{W}_{\alpha})\Big)}\Big\{2\pi\sqrt{-1}Td(M)ch\Big(\sum_{\rho=0}^{l}(-1)^{\rho}\Big(\rho-\frac{l}{2}\Big)\wedge^{\rho}(W^{*})\Big)\Big\}^{(2d)}z\nonumber\\
&+\exp{\Big(\frac{1}{24}\cdot\frac{1}{30}c_{2}(\mathcal{W}_{\alpha})\Big)}\Big\{-2\pi^{2}Td(M)ch\Big(\sum_{\rho=0}^{l}(-1)^{\rho}\Big(\rho-\frac{l}{2}\Big)^{2}\wedge^{\rho}(W^{*})\Big)\Big\}^{(2d)}z^{2}\nonumber\\
&+\exp{\Big(\frac{1}{24}\cdot\frac{1}{30}c_{2}(\mathcal{W}_{\alpha})\Big)}\Big\{-\frac{4}{3}\pi^{3}\sqrt{-1}Td(M)ch\Big(\sum_{\rho=0}^{l}(-1)^{\rho}\Big(\rho-\frac{l}{2}\Big)^{3}\wedge^{\rho}(W^{*})\Big)\Big\}^{(2d)}z^{3}\nonumber\\
&+\exp{\Big(\frac{1}{24}\cdot\frac{1}{30}c_{2}(\mathcal{W}_{\alpha})\Big)}\Big\{\frac{2}{3}\pi^{4}Td(M)ch\Big(\sum_{\rho=0}^{l}(-1)^{\rho}\Big(\rho-\frac{l}{2}\Big)^{4}\wedge^{\rho}(W^{*})\Big)\Big\}^{(2d)}z^{4}+\cdot\cdot\cdot;\nonumber
\end{align}
\begin{align}
&\overline{B_{1}}(z)=\exp{\Big(\frac{1}{24}\cdot\frac{1}{30}c_{2}(\mathcal{W}_{\alpha})\Big)}\Big\{Td(M)\Big(-8-\frac{1}{30}c_{2}(\mathcal{W}_{\alpha})+ch(\mathcal{W}_{\alpha})\Big)ch\Big(\sum_{\rho=0}^{l}(-1)^{\rho}
\end{align}
\begin{align}
&\cdot\wedge^{\rho}(W^{*})\Big)+Td(M)ch\Big(\sum_{\rho=0}^{l}(-1)^{\rho}\wedge^{\rho}(W^{*})\otimes[-W^{*}-W+T^{*}+T-2(d-l)]\Big)\Big\}^{(2d)}\nonumber\\
&+\exp{\Big(\frac{1}{24}\cdot\frac{1}{30}c_{2}(\mathcal{W}_{\alpha})\Big)}\Big\{2\pi\sqrt{-1}Td(M)\Big(-8-\frac{1}{30}c_{2}(\mathcal{W}_{\alpha})+ch(\mathcal{W}_{\alpha})\Big)ch\Big(\sum_{\rho=0}^{l}(-1)^{\rho}\nonumber\\
&\cdot\Big(\rho-\frac{l}{2}\Big)\wedge^{\rho}(W^{*})\Big)+Td(M)ch\Big(2\pi\sqrt{-1}\sum_{\rho=0}^{l}(-1)^{\rho}\rho\wedge^{\rho}(W^{*})\otimes[-W^{*}-W+T^{*}\nonumber\\
&+T-2(d-l)]+\sum_{\rho=0}^{l}(-1)^{\rho}\wedge^{\rho}(W^{*})\otimes[-l\pi\sqrt{-1}(-W^{*}-W+T^{*}+T-2(d-l))\nonumber\\
&-2\pi\sqrt{-1}(W^{*}-W)]\Big)\Big\}^{(2d)}z+\cdot\cdot\cdot;\nonumber
\end{align}
\begin{align}
C_{0}(z)=1+\frac{l}{6}\pi^{2}z^{2}+\frac{l^{2}}{72}\pi^{4}z^{4}+\cdot\cdot\cdot;
\end{align}
\begin{align}
C_{1}(z)=-4l\pi^{2}z^{2}-\frac{2l^{2}}{3}\pi^{4}z^{4}+\cdot\cdot\cdot;
\end{align}
\begin{align}
C_{2}(z)=-12l\pi^{2}z^{2}+14l^{2}\pi^{4}z^{4}+\cdot\cdot\cdot.
\end{align}
We can rewrite (3.4) as
\begin{align}
\sum_{n\geq 0}\overline{a_{n}}(M, W, \tau)\cdot z^{n}=\overline{B_{0}}(z)C_{0}(z)+[\overline{B_{0}}(z)C_{1}(z)+\overline{B_{1}}(z)C_{0}(z)]q+\cdot\cdot\cdot,
\end{align}
by the analysis above, we can have Proposition 3.2.
\end{proof}

\begin{thm}
Let $c_{1}(W)=c_{1}(M)=0$ and the first Pontrjagin classes $p_{1}(W)=p_{1}(M),$ then\\
1) if either $2d<16,$ $2d-l$ is odd or $2d<16, 2d-l\leq-2$ but $2d-l\neq-4,$ then
\begin{align}
&\{Td(M)ch(\wedge_{-1}(W^{*}))\}^{(2d)}=0;\\
&\Big\{Td(M)\Big(-8-\frac{1}{30}c_{2}(\mathcal{W}_{\alpha})+ch(\mathcal{W}_{\alpha})\Big)ch(\wedge_{-1}(W^{*}))+Td(M)ch(\wedge_{-1}(W^{*}))ch(A_{1})\Big\}^{(2d)}\\
&=0;\nonumber\\
&\Big\{Td(M)\Big(20+\frac{1}{6}c_{2}(\mathcal{W}_{\alpha})+\frac{1}{1800}c_{2}(\mathcal{W}_{\alpha})^{2}+ch(\overline{\mathcal{W}_{\alpha}})-8ch(\mathcal{W}_{\alpha})-\frac{1}{30}c_{2}(\mathcal{W}_{\alpha})ch(\mathcal{W}_{\alpha})\Big)
\end{align}
\begin{align}
&\cdot ch(\wedge_{-1}(W^{*}))+Td(M)\Big(-8-\frac{1}{30}c_{2}(\mathcal{W}_{\alpha})+ch(\mathcal{W}_{\alpha})\Big)ch(\wedge_{-1}(W^{*}))ch(A_{1})+Td(M)\nonumber\\
&\cdot ch(\wedge_{-1}(W^{*}))ch(A_{2})\Big\}^{(2d)}=0,\nonumber
\end{align}
2) if either $2d<16,$ $2d-l$ is even or $2d<16, 2d-l\leq-3$ but $2d-l\neq-5,$ then
\begin{align}
&\Big\{2\pi\sqrt{-1}Td(M)ch\Big(\sum_{\rho=0}^{l}(-1)^{\rho}\Big(\rho-\frac{l}{2}\Big)\wedge^{\rho}(W^{*})\Big)\Big\}^{(2d)}=0;\\
&\Big\{2\pi\sqrt{-1}Td(M)\Big(-8-\frac{1}{30}c_{2}(\mathcal{W}_{\alpha})+ch(\mathcal{W}_{\alpha})\Big)ch\Big(\sum_{\rho=0}^{l}(-1)^{\rho}\Big(\rho-\frac{l}{2}\Big)\wedge^{\rho}(W^{*})\Big)\\
&+Td(M)ch(A_{3})\Big\}^{(2d)}=0,\nonumber
\end{align}
3) if either $2d<16,$ $2d-l$ is odd or $2d<16, 2d-l\leq-4$ but $2d-l\neq-6,$ then
\begin{align}
&\Big\{-2\pi^{2}Td(M)ch\Big(\sum_{\rho=0}^{l}(-1)^{\rho}\Big(\rho-\frac{l}{2}\Big)^{2}\wedge^{\rho}(W^{*})\Big)\Big\}^{(2d)}+\Big\{\frac{l}{6}\pi^{2}Td(M)\\
&\cdot ch\Big(\sum_{\rho=0}^{l}(-1)^{\rho}\wedge^{\rho}(W^{*})\Big)\Big\}^{(2d)}=0,\nonumber
\end{align}
4) if either $2d<16,$ $2d-l$ is even or $2d<16, 2d-l\leq-5$ but $2d-l\neq-7,$ then
\begin{align}
&\Big\{-\frac{4}{3}\pi^{3}\sqrt{-1}Td(M)ch\Big(\sum_{\rho=0}^{l}(-1)^{\rho}\Big(\rho-\frac{l}{2}\Big)^{3}\wedge^{\rho}(W^{*})\Big)\Big\}^{(2d)}+\Big\{\frac{l}{3}\pi^{3}\sqrt{-1}Td(M)\\
&\cdot ch\Big(\sum_{\rho=0}^{l}(-1)^{\rho}\Big(\rho-\frac{l}{2}\Big)\wedge^{\rho}(W^{*})\Big)\Big\}^{(2d)}=0,\nonumber
\end{align}
5) if either $2d<16,$ $2d-l$ is odd or $2d<16, 2d-l\leq-6$ but $2d-l\neq-8,$ then
\begin{align}
&\Big\{\frac{2}{3}\pi^{4}Td(M)ch\Big(\sum_{\rho=0}^{l}(-1)^{\rho}\Big(\rho-\frac{l}{2}\Big)^{4}\wedge^{\rho}(W^{*})\Big)\Big\}^{(2d)}+\Big\{-\frac{l}{3}\pi^{4}Td(M)ch\Big(\sum_{\rho=0}^{l}(-1)^{\rho}\\
&\cdot\Big(\rho-\frac{l}{2}\Big)^{2}\wedge^{\rho}(W^{*})\Big)\Big\}^{(2d)}+\Big\{\frac{l^{2}}{72}\pi^{4}Td(M)ch\Big(\sum_{\rho=0}^{l}(-1)^{\rho}\wedge^{\rho}(W^{*})\Big)\Big\}^{(2d)}=0.\nonumber
\end{align}
\end{thm}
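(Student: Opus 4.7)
The plan is to combine Proposition 3.2 with the classical vanishing of low-dimensional spaces of modular forms over $SL_{2}(\mathbf{Z})$, and then to read off the claimed identities from the explicit $q$-expansions already computed.

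First, under the hypothesis $c_{1}(W)=c_{1}(M)=0$ and $p_{1}(W)=p_{1}(M)$, Proposition 3.2 identifies each coefficient $\overline{a_{n}}(M,W,\tau)$ in the expansion of $\exp(-4\pi^{2}lG_{2}(\tau)z^{2})Ell(M,W,\mathcal{V}_{\alpha},\tau,z)$ as a modular form of weight $2d-l+4+n$ over $SL_{2}(\mathbf{Z})$. The five parts of the theorem correspond to the cases $n=0,1,2,3,4$, with respective weights $2d-l+4$, $2d-l+5$, $2d-l+6$, $2d-l+7$, $2d-l+8$.

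Next, I would invoke the classical fact that $M_{k}(SL_{2}(\mathbf{Z}))=0$ whenever $k$ is odd, $k<0$, or $k=2$. Each parity/inequality condition in parts 1)--5) is tailored so that the weight $2d-l+4+n$ lands in one of these vanishing ranges: the \emph{odd} branch covers odd weights directly, while the condition $2d-l\leq -2$ and $2d-l\neq -4$ (and its shifted analogues for $n=1,2,3,4$) covers weights that are at most $2$ but nonzero. The exclusions $2d-l\neq -4,-5,-6,-7,-8$ correspond exactly to weight $0$, where $M_{0}=\mathbf{C}$ is nontrivial and the vanishing argument fails. In every remaining case we conclude that $\overline{a_{n}}(M,W,\tau)\equiv 0$ as a power series in $q$.

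Finally, I would extract the stated identities by equating individual $q$-coefficients of $\overline{a_{n}}$ to zero, using the explicit formulas for the leading $q^{0}$, $q^{1}$, $q^{2}$ terms of $\overline{a_{0}}$ (yielding the three identities of part 1)), the leading $q^{0}$, $q^{1}$ terms of $\overline{a_{1}}$ (yielding part 2)), and the $q^{0}$ terms of $\overline{a_{2}}$, $\overline{a_{3}}$, $\overline{a_{4}}$ (yielding parts 3), 4), 5)) already computed in Proposition 3.2. The only real obstacle is bookkeeping: one must verify case by case that the stated parities and inequalities precisely characterize the weights where $M_{k}(SL_{2}(\mathbf{Z}))$ vanishes, noting that the excluded values of $2d-l$ are exactly those producing weight $0$. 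No new analytic or geometric input is required beyond the weak-Jacobi-form property established in Theorem 2.4 and the coefficient expansions recorded in Proposition 3.2.
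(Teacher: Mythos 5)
Your proposal is correct and is exactly the argument the paper intends (the paper states Theorem 3.3 without writing out a proof, but the proof of Theorem 3.4 makes clear the same mechanism is being used): Proposition 3.2 gives that $\overline{a_{n}}(M,W,\tau)$ is a modular form of weight $2d-l+4+n$ over $SL_{2}(\mathbf{Z})$, the stated parity and inequality conditions force that weight to be odd, negative, or equal to $2$ (excluding weight $0$, where $M_{0}=\mathbf{C}$), hence $\overline{a_{n}}\equiv 0$, and the identities follow by reading off the $q^{0}$, $q^{1}$, $q^{2}$ coefficients recorded in Proposition 3.2. No gaps; the only standing hypothesis you implicitly rely on, $2d<16$, is the assumption already made in Section 2 for the theta-function splitting of $ch(\mathcal{V}_{\alpha})$.
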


\begin{thm}
Let $c_{1}(W)=c_{1}(M)=0$ and the first Pontrjagin classes $p_{1}(W)=p_{1}(M),$ then\\
1) if $2d-l=0,$ then
\begin{align}
&\Big\{Td(M)\Big(-8-\frac{1}{30}c_{2}(\mathcal{W}_{\alpha})+ch(\mathcal{W}_{\alpha})\Big)ch(\wedge_{-1}(W^{*}))+Td(M)ch(\wedge_{-1}(W^{*}))ch(A_{1})\Big\}^{(2d)}\\
&=240\{Td(M)ch(\wedge_{-1}(W^{*}))\}^{(2d)};\nonumber\\
&\Big\{Td(M)\Big(20+\frac{1}{6}c_{2}(\mathcal{W}_{\alpha})+\frac{1}{1800}c_{2}(\mathcal{W}_{\alpha})^{2}+ch(\overline{\mathcal{W}_{\alpha}})-8ch(\mathcal{W}_{\alpha})-\frac{1}{30}c_{2}(\mathcal{W}_{\alpha})ch(\mathcal{W}_{\alpha})\Big)\\
&\cdot ch(\wedge_{-1}(W^{*}))+Td(M)\Big(-8-\frac{1}{30}c_{2}(\mathcal{W}_{\alpha})+ch(\mathcal{W}_{\alpha})\Big)ch(\wedge_{-1}(W^{*}))ch(A_{1})+Td(M)\nonumber\\
&\cdot ch(\wedge_{-1}(W^{*}))ch(A_{2})\Big\}^{(2d)}=2160\{Td(M)ch(\wedge_{-1}(W^{*}))\}^{(2d)},\nonumber
\end{align}
where
\begin{align}
A_{1}&=-W^{*}-W+T^{*}+T+l;\\
A_{2}&=-W^{*}-W+\wedge^{2}(W^{*})+\wedge^{2}(W)+W^{*}\otimes W-W^{*}\otimes T^{*}\\
&-W^{*}\otimes T-W\otimes T^{*}-W\otimes T+T^{*}+T+S^{2}(T^{*})+S^{2}(T)\nonumber\\
&+T^{*}\otimes T-(d-l)(3+l),\nonumber
\end{align}
2) if $2d-l=2,$ then
\begin{align}
&\Big\{Td(M)\Big(-8-\frac{1}{30}c_{2}(\mathcal{W}_{\alpha})+ch(\mathcal{W}_{\alpha})\Big)ch(\wedge_{-1}(W^{*}))+Td(M)ch(\wedge_{-1}(W^{*}))ch(A_{1})\Big\}^{(2d)}\\
&=-504\{Td(M)ch(\wedge_{-1}(W^{*}))\}^{(2d)};\nonumber\\
&\Big\{Td(M)\Big(20+\frac{1}{6}c_{2}(\mathcal{W}_{\alpha})+\frac{1}{1800}c_{2}(\mathcal{W}_{\alpha})^{2}+ch(\overline{\mathcal{W}_{\alpha}})-8ch(\mathcal{W}_{\alpha})-\frac{1}{30}c_{2}(\mathcal{W}_{\alpha})ch(\mathcal{W}_{\alpha})\Big)\\
&\cdot ch(\wedge_{-1}(W^{*}))+Td(M)\Big(-8-\frac{1}{30}c_{2}(\mathcal{W}_{\alpha})+ch(\mathcal{W}_{\alpha})\Big)ch(\wedge_{-1}(W^{*}))ch(A_{1})+Td(M)\nonumber\\
&\cdot ch(\wedge_{-1}(W^{*}))ch(A_{2})\Big\}^{(2d)}=-16632\{Td(M)ch(\wedge_{-1}(W^{*}))\}^{(2d)},\nonumber
\end{align}
where
\begin{align}
A_{1}&=-W^{*}-W+T^{*}+T+l-2;\\
A_{2}&=-W^{*}-W+\wedge^{2}(W^{*})+\wedge^{2}(W)+W^{*}\otimes W-W^{*}\otimes T^{*}\\
&-W^{*}\otimes T-W\otimes T^{*}-W\otimes T+T^{*}+T+S^{2}(T^{*})+S^{2}(T)\nonumber\\
&+T^{*}\otimes T-(d-l)(1+l),\nonumber
\end{align}
3) if $2d-l=4,$ then
\begin{align}
&\Big\{Td(M)\Big(-8-\frac{1}{30}c_{2}(\mathcal{W}_{\alpha})+ch(\mathcal{W}_{\alpha})\Big)ch(\wedge_{-1}(W^{*}))+Td(M)ch(\wedge_{-1}(W^{*}))ch(A_{1})\Big\}^{(2d)}\\
&=480\{Td(M)ch(\wedge_{-1}(W^{*}))\}^{(2d)};\nonumber\\
&\Big\{Td(M)\Big(20+\frac{1}{6}c_{2}(\mathcal{W}_{\alpha})+\frac{1}{1800}c_{2}(\mathcal{W}_{\alpha})^{2}+ch(\overline{\mathcal{W}_{\alpha}})-8ch(\mathcal{W}_{\alpha})-\frac{1}{30}c_{2}(\mathcal{W}_{\alpha})ch(\mathcal{W}_{\alpha})\Big)\\
&\cdot ch(\wedge_{-1}(W^{*}))+Td(M)\Big(-8-\frac{1}{30}c_{2}(\mathcal{W}_{\alpha})+ch(\mathcal{W}_{\alpha})\Big)ch(\wedge_{-1}(W^{*}))ch(A_{1})+Td(M)\nonumber\\
&\cdot ch(\wedge_{-1}(W^{*}))ch(A_{2})\Big\}^{(2d)}=61920\{Td(M)ch(\wedge_{-1}(W^{*}))\}^{(2d)},\nonumber
\end{align}
where
\begin{align}
A_{1}&=-W^{*}-W+T^{*}+T+l-4;\\
A_{2}&=-W^{*}-W+\wedge^{2}(W^{*})+\wedge^{2}(W)+W^{*}\otimes W-W^{*}\otimes T^{*}\\
&-W^{*}\otimes T-W\otimes T^{*}-W\otimes T+T^{*}+T+S^{2}(T^{*})+S^{2}(T)\nonumber\\
&+T^{*}\otimes T+(d-l)(1-l),\nonumber
\end{align}
4) if $2d-l=6,$ then
\begin{align}
&\Big\{Td(M)\Big(-8-\frac{1}{30}c_{2}(\mathcal{W}_{\alpha})+ch(\mathcal{W}_{\alpha})\Big)ch(\wedge_{-1}(W^{*}))+Td(M)ch(\wedge_{-1}(W^{*}))ch(A_{1})\Big\}^{(2d)}\\
&=-264\{Td(M)ch(\wedge_{-1}(W^{*}))\}^{(2d)};\nonumber\\
&\Big\{Td(M)\Big(20+\frac{1}{6}c_{2}(\mathcal{W}_{\alpha})+\frac{1}{1800}c_{2}(\mathcal{W}_{\alpha})^{2}+ch(\overline{\mathcal{W}_{\alpha}})-8ch(\mathcal{W}_{\alpha})-\frac{1}{30}c_{2}(\mathcal{W}_{\alpha})ch(\mathcal{W}_{\alpha})\Big)\\
&\cdot ch(\wedge_{-1}(W^{*}))+Td(M)\Big(-8-\frac{1}{30}c_{2}(\mathcal{W}_{\alpha})+ch(\mathcal{W}_{\alpha})\Big)ch(\wedge_{-1}(W^{*}))ch(A_{1})+Td(M)\nonumber\\
&\cdot ch(\wedge_{-1}(W^{*}))ch(A_{2})\Big\}^{(2d)}=-135432\{Td(M)ch(\wedge_{-1}(W^{*}))\}^{(2d)},\nonumber
\end{align}
where
\begin{align}
A_{1}&=-W^{*}-W+T^{*}+T+l-6;\\
A_{2}&=-W^{*}-W+\wedge^{2}(W^{*})+\wedge^{2}(W)+W^{*}\otimes W-W^{*}\otimes T^{*}\\
&-W^{*}\otimes T-W\otimes T^{*}-W\otimes T+T^{*}+T+S^{2}(T^{*})+S^{2}(T)\nonumber\\
&+T^{*}\otimes T+(d-l)(3-l),\nonumber
\end{align}
5) if $2d-l=8,$ then
\begin{align}
&\Big\{Td(M)\Big(20+\frac{1}{6}c_{2}(\mathcal{W}_{\alpha})+\frac{1}{1800}c_{2}(\mathcal{W}_{\alpha})^{2}+ch(\overline{\mathcal{W}_{\alpha}})-8ch(\mathcal{W}_{\alpha})-\frac{1}{30}c_{2}(\mathcal{W}_{\alpha})ch(\mathcal{W}_{\alpha})\Big)\\
&\cdot ch(\wedge_{-1}(W^{*}))+Td(M)\Big(-8-\frac{1}{30}c_{2}(\mathcal{W}_{\alpha})+ch(\mathcal{W}_{\alpha})\Big)ch(\wedge_{-1}(W^{*}))ch(A_{1})+Td(M)\nonumber\\
&\cdot ch(\wedge_{-1}(W^{*}))ch(A_{2})\Big\}^{(2d)}=196560\{Td(M)ch(\wedge_{-1}(W^{*}))\}^{(2d)}-24\Big\{Td(M)\Big(-8\nonumber\\
&-\frac{1}{30}c_{2}(\mathcal{W}_{\alpha})+ch(\mathcal{W}_{\alpha})\Big)ch(\wedge_{-1}(W^{*}))+Td(M)ch(\wedge_{-1}(W^{*}))ch(A_{1})\Big\}^{(2d)},\nonumber
\end{align}
where
\begin{align}
A_{1}&=-W^{*}-W+T^{*}+T+l-8;\\
A_{2}&=-W^{*}-W+\wedge^{2}(W^{*})+\wedge^{2}(W)+W^{*}\otimes W-W^{*}\otimes T^{*}\\
&-W^{*}\otimes T-W\otimes T^{*}-W\otimes T+T^{*}+T+S^{2}(T^{*})+S^{2}(T)\nonumber\\
&+T^{*}\otimes T+(d-l)(5-l),\nonumber
\end{align}
6) if $2d-l=10,$ then
\begin{align}
&\Big\{Td(M)\Big(-8-\frac{1}{30}c_{2}(\mathcal{W}_{\alpha})+ch(\mathcal{W}_{\alpha})\Big)ch(\wedge_{-1}(W^{*}))+Td(M)ch(\wedge_{-1}(W^{*}))ch(A_{1})\Big\}^{(2d)}\\
&=-24\{Td(M)ch(\wedge_{-1}(W^{*}))\}^{(2d)};\nonumber\\
&\Big\{Td(M)\Big(20+\frac{1}{6}c_{2}(\mathcal{W}_{\alpha})+\frac{1}{1800}c_{2}(\mathcal{W}_{\alpha})^{2}+ch(\overline{\mathcal{W}_{\alpha}})-8ch(\mathcal{W}_{\alpha})-\frac{1}{30}c_{2}(\mathcal{W}_{\alpha})ch(\mathcal{W}_{\alpha})\Big)\\
&\cdot ch(\wedge_{-1}(W^{*}))+Td(M)\Big(-8-\frac{1}{30}c_{2}(\mathcal{W}_{\alpha})+ch(\mathcal{W}_{\alpha})\Big)ch(\wedge_{-1}(W^{*}))ch(A_{1})+Td(M)\nonumber\\
&\cdot ch(\wedge_{-1}(W^{*}))ch(A_{2})\Big\}^{(2d)}=-196632\{Td(M)ch(\wedge_{-1}(W^{*}))\}^{(2d)},\nonumber
\end{align}
where
\begin{align}
A_{1}&=-W^{*}-W+T^{*}+T+l-10;\\
A_{2}&=-W^{*}-W+\wedge^{2}(W^{*})+\wedge^{2}(W)+W^{*}\otimes W-W^{*}\otimes T^{*}\\
&-W^{*}\otimes T-W\otimes T^{*}-W\otimes T+T^{*}+T+S^{2}(T^{*})+S^{2}(T)\nonumber\\
&+T^{*}\otimes T+(d-l)(7-l),\nonumber
\end{align}
7) if $2d-l=12,$ then
\begin{align}
&\Big\{Td(M)\Big(20+\frac{1}{6}c_{2}(\mathcal{W}_{\alpha})+\frac{1}{1800}c_{2}(\mathcal{W}_{\alpha})^{2}+ch(\overline{\mathcal{W}_{\alpha}})-8ch(\mathcal{W}_{\alpha})-\frac{1}{30}c_{2}(\mathcal{W}_{\alpha})ch(\mathcal{W}_{\alpha})\Big)\\
&\cdot ch(\wedge_{-1}(W^{*}))+Td(M)\Big(-8-\frac{1}{30}c_{2}(\mathcal{W}_{\alpha})+ch(\mathcal{W}_{\alpha})\Big)ch(\wedge_{-1}(W^{*}))ch(A_{1})+Td(M)\nonumber\\
&\cdot ch(\wedge_{-1}(W^{*}))ch(A_{2})\Big\}^{(2d)}=146880\{Td(M)ch(\wedge_{-1}(W^{*}))\}^{(2d)}+216\Big\{Td(M)\Big(-8\nonumber\\
&-\frac{1}{30}c_{2}(\mathcal{W}_{\alpha})+ch(\mathcal{W}_{\alpha})\Big)ch(\wedge_{-1}(W^{*}))+Td(M)ch(\wedge_{-1}(W^{*}))ch(A_{1})\Big\}^{(2d)},\nonumber
\end{align}
where
\begin{align}
A_{1}&=-W^{*}-W+T^{*}+T+l-12;\\
A_{2}&=-W^{*}-W+\wedge^{2}(W^{*})+\wedge^{2}(W)+W^{*}\otimes W-W^{*}\otimes T^{*}\\
&-W^{*}\otimes T-W\otimes T^{*}-W\otimes T+T^{*}+T+S^{2}(T^{*})+S^{2}(T)\nonumber\\
&+T^{*}\otimes T+(d-l)(9-l).\nonumber
\end{align}
\end{thm}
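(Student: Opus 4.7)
The plan is to apply Proposition 3.2 with $n=0$: the series $\overline{a_{0}}(M, W, \tau)$ is a modular form of weight $k := 2d-l+4$ over $SL_{2}(\mathbf{Z})$ with values in degree-$2d$ differential forms on $M$. For each of the seven cases, $k$ lies in the set $\{4, 6, 8, 10, 12, 14, 16\}$, and my strategy is to compare the first three Fourier coefficients of $\overline{a_{0}}$, read off from the explicit $q^{0}$, $q^{1}$, $q^{2}$ expressions already established during the proof of Proposition 3.2, against those of a basis of $M_{k}(SL_{2}(\mathbf{Z}))$.

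First I would observe that the expressions for $A_{1}$ and $A_{2}$ in each case of Theorem 3.4 are simply the corresponding expressions from Proposition 3.2 rewritten under the specialisation $2d-l=2k'$: the term $-2(d-l)$ becomes $l-2k'$, and $(d-l)(2d-2l-3)$ is recombined accordingly (giving, for instance, $-(d-l)(3+l)$ in Case 1 and $(d-l)(9-l)$ in Case 7). With this identification, the bracketed quantities on the left-hand sides of the two displayed identities in each case are precisely the $q^{1}$- and $q^{2}$-coefficients of $\overline{a_{0}}(M, W, \tau)$, using the fact that $C_{0}(0)=1$ and $C_{1}(0)=C_{2}(0)=0$ from the formulas for $C_{j}(z)$.

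Next, for the five weights $k\in\{4,6,8,10,14\}$ (Cases 1--4 and 6), $\dim M_{k}(SL_{2}(\mathbf{Z}))=1$ and the space is spanned by the normalised Eisenstein series $E_{k}$. Hence I would conclude
\[
\overline{a_{0}}(M, W, \tau) \;=\; \{Td(M)\,ch(\wedge_{-1}(W^{*}))\}^{(2d)}\cdot E_{k}(\tau),
\]
and match the $q^{1}$ and $q^{2}$ coefficients against the standard expansions $E_{4}=1+240q+2160q^{2}+\cdots$, $E_{6}=1-504q-16632q^{2}+\cdots$, $E_{8}=1+480q+61920q^{2}+\cdots$, $E_{10}=1-264q-135432q^{2}+\cdots$, $E_{14}=1-24q-196632q^{2}+\cdots$, which recovers exactly the numerical constants $240, 2160, -504, -16632, 480, 61920, -264, -135432, -24, -196632$ on the right-hand sides of the identities in these cases.

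For the remaining weights $k=12$ (Case 5) and $k=16$ (Case 7), $M_{k}(SL_{2}(\mathbf{Z}))$ is two-dimensional, so a single constant term no longer determines the form. I would take the bases $\{E_{4}^{3}, \Delta\}$ for weight $12$ and $\{E_{4}^{4}, E_{4}\Delta\}$ for weight $16$, and invert the linear system expressing the first three Fourier coefficients in terms of the basis coefficients to obtain the universal relations $f_{2}=196560 f_{0}-24 f_{1}$ in weight $12$ and $f_{2}=146880 f_{0}+216 f_{1}$ in weight $16$. Applied to $f=\overline{a_{0}}(M, W, \tau)$, these are precisely the hybrid identities in Cases 5 and 7. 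The main obstacle will be purely bookkeeping --- keeping careful track of $\exp(\tfrac{1}{24}E_{2}(\tau)\cdot\tfrac{1}{30}c_{2}(\mathcal{W}_{\alpha}))$ through the $q$-expansion, confirming the case-by-case rewriting of $A_{1}$ and $A_{2}$, and verifying that the elimination in the two-dimensional cases yields exactly the integers $196560$, $-24$, $146880$ and $216$ --- rather than anything conceptually new beyond Proposition 3.2 and the structure of $M_{*}(SL_{2}(\mathbf{Z}))$.
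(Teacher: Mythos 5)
Your proposal is correct and follows essentially the same route as the paper: invoke Proposition 3.2 to see that $\overline{a_{0}}(M,W,\tau)$ is a modular form of weight $2d-l+4\in\{4,\dots,16\}$, then compare its $q^{0},q^{1},q^{2}$ coefficients against a basis of $M_{k}(SL_{2}(\mathbf{Z}))$, treating the one-dimensional weights by direct proportionality to the normalised Eisenstein series and the two-dimensional weights $12$ and $16$ by solving the resulting linear system. The only (immaterial) difference is your choice of $\{E_{4}^{3},\Delta\}$ and $\{E_{4}^{4},E_{4}\Delta\}$ in place of the paper's $\{G_{4}^{3},G_{6}^{2}\}$ and $\{G_{4}^{4},G_{4}G_{6}^{2}\}$, which yields the same universal relations $f_{2}=196560f_{0}-24f_{1}$ and $f_{2}=146880f_{0}+216f_{1}$.
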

\begin{proof}
Since $\overline{a_{n}}(M, W, \tau)$ are modular forms of weight $2d-l+4+n$ over $SL_{2}(\mathbf{Z}).$ 
Consequently,\\
1) if $2d-l=0,$ then $\overline{a_{0}}(M, W, \tau)$ is proportional to
\begin{align}
G_{4}(\tau)=1+240q+2160q^{2}+6720q^{3}+\cdot\cdot\cdot,
\end{align}
by comparing the constant term and the coefficients of $q,$ we can get (3.32) and (3.33).\\
2) if $2d-l=2,$ then $\overline{a_{0}}(M, W, \tau)$ is proportional to
\begin{align}
G_{6}(\tau)=1-504q-16632q^{2}-122976q^{3}+\cdot\cdot\cdot,
\end{align}
we have (3.36) and (3.37).\\
3) if $2d-l=4,$ then $\overline{a_{0}}(M, W, \tau)$ is proportional to
\begin{align}
G_{4}(\tau)^{2}=1+480q+61920q^{2}+\cdot\cdot\cdot,
\end{align}
Via simple calculations, we can obtain (3.40) and (3.41).\\
4) if $2d-l=6,$ then $\overline{a_{0}}(M, W, \tau)$ is proportional to
\begin{align}
G_{4}(\tau)G_{6}(\tau)=1-264q-135432q^{2}+\cdot\cdot\cdot,
\end{align}
It is easy to obtain (3.44) and (3.45).\\
5) if $2d-l=8,$ then
\begin{align}
\overline{a_{0}}(M, W, \tau)=\lambda_{1}G_{4}(\tau)^{3}+\lambda_{2}G_{6}(\tau)^{2},
\end{align}
where $\lambda_{1}, \lambda_{2}$ are degree $2d$ forms. It is widely known that
\begin{align}
&G_{4}(\tau)^{3}=1+720q+179280q^{2}+\cdot\cdot\cdot,\\
&G_{6}(\tau)^{2}=1-1008q+220752q^{2}+\cdot\cdot\cdot.
\end{align}
In (3.62), we compare the coefficients of $1, q, q^{2},$ we get three equations about $\lambda_{1}, \lambda_{2}.$
Solve the three equations, we get (3.48).\\
6) if $2d-l=10,$ then $\overline{a_{0}}(M, W, \tau)$ is proportional to
\begin{align}
G_{4}(\tau)^{2}G_{6}(\tau)=1-24q-196632q^{2}+\cdot\cdot\cdot,
\end{align}
so (3.51) and (3.52) holds.\\
7) if $2d-l=12,$ then
\begin{align}
\overline{a_{0}}(M, W, \tau)=\overline{\lambda_{1}}G_{4}(\tau)^{4}+\overline{\lambda_{2}}G_{4}(\tau)G_{6}(\tau)^{2},
\end{align}
where $\overline{\lambda_{1}}, \overline{\lambda_{2}}$ are degree $2d$ forms. Note that
\begin{align}
&G_{4}(\tau)^{4}=1+960q+354240q^{2}+\cdot\cdot\cdot,\\
&G_{4}(\tau)G_{6}(\tau)^{2}=1-768q-19008q^{2}+\cdot\cdot\cdot.
\end{align}
By applying the formula shown in (3.66), we can get (3.55).
\end{proof}

\section{ Anomaly cancellation formulas for the gauge group $E_{8}\times E_{8}$ }

\begin{prop}
Let $c_{1}(W)=c_{1}(M)=0$ and the first Pontrjagin classes $p_{1}(W)=p_{1}(M),$ then the series $\widetilde{a_{n}}(M, W, \tau)$ determined by
\begin{align}
\exp{(-4\pi^{2}lG_{2}(\tau)z^{2})}Ell(M, W, \mathcal{V}_{\alpha}, \mathcal{V}_{\beta}, \tau, z)=\sum_{n\geq 0}\widetilde{a_{n}}(M, W, \tau)\cdot z^{n},
\end{align}
are modular forms of weight $2d-l+8+n$ over $SL_{2}(\mathbf{Z}).$ 
Furthermore, the first five series of $\widetilde{a_{n}}(M, W, \tau)$ are of the following form:
\begin{align}
&\widetilde{a_{0}}(M, W, \tau)=\exp{\Big(\frac{1}{24}\cdot\frac{1}{30}(c_{2}(\mathcal{W}_{\alpha})+c_{2}(\mathcal{W}_{\beta}))\Big)}\{Td(M)ch(\wedge_{-1}(W^{*}))\}^{(2d)}\\
&+\exp{\Big(\frac{1}{24}\cdot\frac{1}{30}(c_{2}(\mathcal{W}_{\alpha})+c_{2}(\mathcal{W}_{\beta}))\Big)}\Big\{Td(M)\Big(-16-\frac{1}{30}(c_{2}(\mathcal{W}_{\alpha})+c_{2}(\mathcal{W}_{\beta}))\nonumber\\
&+ch(\mathcal{W}_{\alpha})+ch(\mathcal{W}_{\beta})\Big)ch(\wedge_{-1}(W^{*}))+Td(M)ch(\wedge_{-1}(W^{*}))ch(A_{1})\Big\}^{(2d)}q\nonumber\\
&+\exp{\Big(\frac{1}{24}\cdot\frac{1}{30}(c_{2}(\mathcal{W}_{\alpha})+c_{2}(\mathcal{W}_{\beta}))\Big)}\Big\{Td(M)\Big(104+\frac{13}{30}(c_{2}(\mathcal{W}_{\alpha})+c_{2}(\mathcal{W}_{\beta}))\nonumber\\
&+\frac{1}{1800}(c_{2}(\mathcal{W}_{\alpha})+c_{2}(\mathcal{W}_{\beta}))^{2}+ch(\overline{\mathcal{W}_{\alpha}})+ch(\overline{\mathcal{W}_{\beta}})-16ch(\mathcal{W}_{\alpha})-\frac{1}{30}(c_{2}(\mathcal{W}_{\alpha})\nonumber\\
&+c_{2}(\mathcal{W}_{\beta}))ch(\mathcal{W}_{\alpha})-16ch(\mathcal{W}_{\beta})-\frac{1}{30}(c_{2}(\mathcal{W}_{\alpha})+c_{2}(\mathcal{W}_{\beta}))ch(\mathcal{W}_{\beta})+ch(\mathcal{W}_{\alpha})\nonumber\\
&\cdot ch(\mathcal{W}_{\beta})\Big)ch(\wedge_{-1}(W^{*}))+Td(M)\Big(-16-\frac{1}{30}(c_{2}(\mathcal{W}_{\alpha})+c_{2}(\mathcal{W}_{\beta}))+ch(\mathcal{W}_{\alpha})\nonumber\\
&+ch(\mathcal{W}_{\beta})\Big)ch(\wedge_{-1}(W^{*}))ch(A_{1})+Td(M)ch(\wedge_{-1}(W^{*}))ch(A_{2})\Big\}^{(2d)}q^{2}+\cdot\cdot\cdot,\nonumber
\end{align}
where
\begin{align}
A_{1}&=-W^{*}-W+T^{*}+T-2(d-l),\\
A_{2}&=-W^{*}-W+\wedge^{2}(W^{*})+\wedge^{2}(W)+W^{*}\otimes W-W^{*}\otimes T^{*}\\
&-W^{*}\otimes T-W\otimes T^{*}-W\otimes T+T^{*}+T+S^{2}(T^{*})+S^{2}(T)\nonumber\\
&+T^{*}\otimes T+(d-l)(2d-2l-3);\nonumber
\end{align}
\begin{align}
&\widetilde{a_{1}}(M, W, \tau)=\exp{\Big(\frac{1}{24}\cdot\frac{1}{30}(c_{2}(\mathcal{W}_{\alpha})+c_{2}(\mathcal{W}_{\beta}))\Big)}\Big\{2\pi\sqrt{-1}Td(M)ch\Big(\sum_{\rho=0}^{l}(-1)^{\rho}\\
&\cdot\Big(\rho-\frac{l}{2}\Big)\wedge^{\rho}(W^{*})\Big)\Big\}^{(2d)}+\exp{\Big(\frac{1}{24}\cdot\frac{1}{30}(c_{2}(\mathcal{W}_{\alpha})+c_{2}(\mathcal{W}_{\beta}))\Big)}\Big\{2\pi\sqrt{-1}Td(M)\nonumber\\
&\cdot\Big(-16-\frac{1}{30}(c_{2}(\mathcal{W}_{\alpha})+c_{2}(\mathcal{W}_{\beta}))+ch(\mathcal{W}_{\alpha})+ch(\mathcal{W}_{\beta})\Big)ch\Big(\sum_{\rho=0}^{l}(-1)^{\rho}\Big(\rho-\frac{l}{2}\Big)\nonumber\\
&\cdot\wedge^{\rho}(W^{*})\Big)+Td(M)ch(A_{3})\Big\}^{(2d)}q+\cdot\cdot\cdot,\nonumber
\end{align}
where
\begin{align}
A_{3}&=2\pi\sqrt{-1}\sum_{\rho=0}^{l}(-1)^{\rho}\rho\wedge^{\rho}(W^{*})\otimes[-W^{*}-W+T^{*}+T-2(d-l)]\\
&+\sum_{\rho=0}^{l}(-1)^{\rho}\wedge^{\rho}(W^{*})\otimes[-l\pi\sqrt{-1}(-W^{*}-W+T^{*}+T-2(d-l))\nonumber\\
&-2\pi\sqrt{-1}(W^{*}-W)];\nonumber
\end{align}
\begin{align}
&\widetilde{a_{2}}(M, W, \tau)=\exp{\Big(\frac{1}{24}\cdot\frac{1}{30}(c_{2}(\mathcal{W}_{\alpha})+c_{2}(\mathcal{W}_{\beta}))\Big)}\Big\{-2\pi^{2}Td(M)ch\Big(\sum_{\rho=0}^{l}(-1)^{\rho}\\
&\cdot\Big(\rho-\frac{l}{2}\Big)^{2}\wedge^{\rho}(W^{*})\Big)\Big\}^{(2d)}+\exp{\Big(\frac{1}{24}\cdot\frac{1}{30}(c_{2}(\mathcal{W}_{\alpha})+c_{2}(\mathcal{W}_{\beta}))\Big)}\Big\{\frac{l}{6}\pi^{2}Td(M)\nonumber\\
&\cdot ch\Big(\sum_{\rho=0}^{l}(-1)^{\rho}\wedge^{\rho}(W^{*})\Big)\Big\}^{(2d)}+\cdot\cdot\cdot;\nonumber
\end{align}
\begin{align}
&\widetilde{a_{3}}(M, W, \tau)=\exp{\Big(\frac{1}{24}\cdot\frac{1}{30}(c_{2}(\mathcal{W}_{\alpha})+c_{2}(\mathcal{W}_{\beta}))\Big)}\Big\{-\frac{4}{3}\pi^{3}\sqrt{-1}Td(M)ch\Big(\sum_{\rho=0}^{l}(-1)^{\rho}\end{align}
\begin{align}
&\cdot\Big(\rho-\frac{l}{2}\Big)^{3}\wedge^{\rho}(W^{*})\Big)\Big\}^{(2d)}+\exp{\Big(\frac{1}{24}\cdot\frac{1}{30}(c_{2}(\mathcal{W}_{\alpha})+c_{2}(\mathcal{W}_{\beta}))\Big)}\Big\{\frac{l}{3}\pi^{3}\sqrt{-1}Td(M)\nonumber\\
&\cdot ch\Big(\sum_{\rho=0}^{l}(-1)^{\rho}\Big(\rho-\frac{l}{2}\Big)\wedge^{\rho}(W^{*})\Big)\Big\}^{(2d)}+\cdot\cdot\cdot;\nonumber
\end{align}
\begin{align}
&\widetilde{a_{4}}(M, W, \tau)=\exp{\Big(\frac{1}{24}\cdot\frac{1}{30}(c_{2}(\mathcal{W}_{\alpha})+c_{2}(\mathcal{W}_{\beta}))\Big)}\Big\{\frac{2}{3}\pi^{4}Td(M)ch\Big(\sum_{\rho=0}^{l}(-1)^{\rho}\\
&\cdot\Big(\rho-\frac{l}{2}\Big)^{4}\wedge^{\rho}(W^{*})\Big)\Big\}^{(2d)}+\exp{\Big(\frac{1}{24}\cdot\frac{1}{30}(c_{2}(\mathcal{W}_{\alpha})+c_{2}(\mathcal{W}_{\beta}))\Big)}\Big\{-\frac{l}{3}\pi^{4}Td(M)\nonumber\\
&\cdot ch\Big(\sum_{\rho=0}^{l}(-1)^{\rho}\Big(\rho-\frac{l}{2}\Big)^{2}\wedge^{\rho}(W^{*})\Big)\Big\}^{(2d)}+\exp{\Big(\frac{1}{24}\cdot\frac{1}{30}(c_{2}(\mathcal{W}_{\alpha})+c_{2}(\mathcal{W}_{\beta}))\Big)}\nonumber\\
&\cdot\Big\{\frac{l^{2}}{72}\pi^{4}Td(M)ch\Big(\sum_{\rho=0}^{l}(-1)^{\rho}\wedge^{\rho}(W^{*})\Big)\Big\}^{(2d)}+\cdot\cdot\cdot.\nonumber
\end{align}
\end{prop}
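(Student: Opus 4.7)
The plan is to apply Lemma 3.1 to the weak Jacobi form supplied by Theorem 2.7, and then repeat the Cauchy-product bookkeeping of Proposition 3.2 with the factor $\varphi(\tau)^{8}ch(\mathcal{V}_{\alpha})$ replaced by $\varphi(\tau)^{16}ch(\mathcal{V}_{\alpha})ch(\mathcal{V}_{\beta})$ and with the exponential prefactor $\exp(\tfrac{1}{24}\cdot\tfrac{1}{30}c_{2}(\mathcal{W}_{\alpha}))$ replaced by $\exp(\tfrac{1}{24}\cdot\tfrac{1}{30}(c_{2}(\mathcal{W}_{\alpha})+c_{2}(\mathcal{W}_{\beta})))$. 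By Theorem 2.7, under the hypotheses $c_{1}(W)=0$ and $p_{1}(W)=p_{1}(M)$, the function $Ell(M,W,\mathcal{V}_{\alpha},\mathcal{V}_{\beta},\tau,z)$ is a weak Jacobi form of weight $k=2d-l+8$ and index $t=l/2$. Since $8\pi^{2}t=4\pi^{2}l$, Lemma 3.1 applies verbatim: the Taylor coefficients in $z$ of $\exp(-4\pi^{2}lG_{2}(\tau)z^{2})Ell(M,W,\mathcal{V}_{\alpha},\mathcal{V}_{\beta},\tau,z)$ are modular forms over $SL_{2}(\mathbf{Z})$ of weight $k+n=2d-l+8+n$. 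This settles the modularity assertion without any further computation.

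For the explicit formulas I would write $Ell(M,W,\mathcal{V}_{\alpha},\mathcal{V}_{\beta},\tau,z)=\widetilde{B_{0}}(z)+\widetilde{B_{1}}(z)q+\widetilde{B_{2}}(z)q^{2}+\cdots$ and reuse the series $C_{0}(z),C_{1}(z),C_{2}(z),\ldots$ from Proposition 3.2 unchanged, since they depend only on $l$. Extracting the coefficient of $z^{n}q^{m}$ from the Cauchy product $\big(\sum_{j}\widetilde{B_{j}}(z)q^{j}\big)\big(\sum_{k}C_{k}(z)q^{k}\big)$ proceeds exactly as in the proof of Proposition 3.2. Because the $z$-dependence of $Ell(M,W,\mathcal{V}_{\alpha},\mathcal{V}_{\beta},\tau,z)$ is concentrated entirely in the gauge-independent factor $\prod_{j=1}^{l}\theta(\tau,w_{j}-z)$, the $z$-expansion of each $\widetilde{B_{j}}(z)$ is obtained from the corresponding $\overline{B_{j}}(z)$ of the single-$E_{8}$ case merely by the two replacements above; this explains at once why $\widetilde{a_{2}},\widetilde{a_{3}},\widetilde{a_{4}}$ have the same shape as $\overline{a_{2}},\overline{a_{3}},\overline{a_{4}}$, and why the elements $A_{1},A_{2},A_{3}$ are identical to those of Section 3.

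The only genuinely new computation is the $q$-expansion of $\varphi(\tau)^{16}ch(\mathcal{V}_{\alpha})ch(\mathcal{V}_{\beta})$, which factors as the Cauchy product of the two series $\varphi(\tau)^{8}ch(\mathcal{V}_{\alpha})$ and $\varphi(\tau)^{8}ch(\mathcal{V}_{\beta})$ whose first three $q$-coefficients are already recorded in the proof of Proposition 3.2. Multiplying them produces the doubled constant $-16=-8-8$, the combined term $-\tfrac{1}{30}(c_{2}(\mathcal{W}_{\alpha})+c_{2}(\mathcal{W}_{\beta}))$, and at order $q^{2}$ the constant $104=20+20+64$, the coefficient $\tfrac{13}{30}$ for each $c_{2}(\mathcal{W}_{\bullet})$, the square $\tfrac{1}{1800}(c_{2}(\mathcal{W}_{\alpha})+c_{2}(\mathcal{W}_{\beta}))^{2}$ (the cross term coming from $\tfrac{1}{900}c_{2}(\mathcal{W}_{\alpha})c_{2}(\mathcal{W}_{\beta})$), and the genuinely mixed term $ch(\mathcal{W}_{\alpha})ch(\mathcal{W}_{\beta})$. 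Combined with the unchanged pieces $E(M,W,\tau,0)$, $A_{1}$, $A_{2}$, $A_{3}$ carried over from the single-$E_{8}$ case, this yields the stated formulas for $\widetilde{a_{0}}$ and $\widetilde{a_{1}}$. The main obstacle is purely combinatorial bookkeeping of the cross terms $c_{2}(\mathcal{W}_{\alpha})c_{2}(\mathcal{W}_{\beta})$, $c_{2}(\mathcal{W}_{\alpha})ch(\mathcal{W}_{\beta})$, $ch(\mathcal{W}_{\alpha})ch(\mathcal{W}_{\beta})$, \emph{etc.}, so that nothing is dropped and all rational coefficients combine as displayed; no new modular or geometric ingredient beyond Theorem 2.7 and Lemma 3.1 is required.
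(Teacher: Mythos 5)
Your proposal is correct and follows essentially the same route as the paper: modularity comes from applying Lemma 3.1 to the weak Jacobi form of Theorem 2.7, and the explicit coefficients come from the same Cauchy-product expansion in $q$ and $z$ used for Proposition 3.2, with $\varphi(\tau)^{8}ch(\mathcal{V}_{\alpha})$ and the exponential prefactor replaced by their $E_{8}\times E_{8}$ analogues. Your observation that $\exp\big(\tfrac{1}{24}E_{2}(\tau)\cdot\tfrac{1}{30}(c_{2}(\mathcal{W}_{\alpha})+c_{2}(\mathcal{W}_{\beta}))\big)\varphi(\tau)^{16}ch(\mathcal{V}_{\alpha})ch(\mathcal{V}_{\beta})$ factors as the product of the two single-$E_{8}$ series (yielding $-16=-8-8$, $104=20+20+64$, $\tfrac{13}{30}=\tfrac{1}{6}+\tfrac{8}{30}$, and the cross term $\tfrac{1}{900}c_{2}(\mathcal{W}_{\alpha})c_{2}(\mathcal{W}_{\beta})$ completing the square) is a clean and verifiably correct way to organize the same bookkeeping the paper carries out directly.
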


\begin{proof}
According to the condition $c_{1}(W)=c_{1}(M)=0$ and $p_{1}(W)=p_{1}(M),$ we calculate that
\begin{align}
\widetilde{a_{0}}(M, W, \tau)=&\Big\{\exp{\Big(\frac{1}{24}E_{2}(\tau)\cdot\frac{1}{30}(c_{2}(\mathcal{W}_{\alpha})+c_{2}(\mathcal{W}_{\beta}))\Big)}Td(M)ch(E(M, W, \tau, 0))\\
&\cdot\varphi(\tau)^{(16)}ch(\mathcal{V}_{\alpha})ch(\mathcal{V}_{\beta})\Big\}^{(2d)},\nonumber
\end{align}
where
\begin{align}
E(M, W, \tau, 0)&=\wedge_{-1}(W^{*})\{1+[-W^{*}-W+T^{*}+T-2(d-l)]q+[-W^{*}-W+\wedge^{2}(W^{*})\\
&+\wedge^{2}(W)+W^{*}\otimes W-W^{*}\otimes T^{*}-W^{*}\otimes T-W\otimes T^{*}-W\otimes T+T^{*}+T\nonumber\\
&+S^{2}(T^{*})+S^{2}(T)+T^{*}\otimes T+(d-l)(2d-2l-3)]q^{2}+\cdot\cdot\cdot\}.\nonumber
\end{align}
Write
\begin{align}
Ell(M, W, \mathcal{V}_{\alpha}, \mathcal{V}_{\beta}, \tau, z):=\widetilde{B_{0}}(z)+\widetilde{B_{1}}(z)q+\widetilde{B_{2}}(z)q^{2}+\cdot\cdot\cdot.
\end{align}
Consequently, we have
\begin{align}
&\widetilde{B_{0}}(z)=\exp{\Big(\frac{1}{24}\cdot\frac{1}{30}(c_{2}(\mathcal{W}_{\alpha})+c_{2}(\mathcal{W}_{\beta}))\Big)}\Big\{Td(M)ch\Big(\sum_{\rho=0}^{l}(-1)^{\rho}\wedge^{\rho}(W^{*})\Big)\Big\}^{(2d)}\\
&+\exp{\Big(\frac{1}{24}\cdot\frac{1}{30}(c_{2}(\mathcal{W}_{\alpha})+c_{2}(\mathcal{W}_{\beta}))\Big)}\Big\{2\pi\sqrt{-1}Td(M)ch\Big(\sum_{\rho=0}^{l}(-1)^{\rho}\Big(\rho-\frac{l}{2}\Big)\wedge^{\rho}(W^{*})\Big)\Big\}^{(2d)}z\nonumber
\end{align}
\begin{align}
&+\exp{\Big(\frac{1}{24}\cdot\frac{1}{30}(c_{2}(\mathcal{W}_{\alpha})+c_{2}(\mathcal{W}_{\beta}))\Big)}\Big\{-2\pi^{2}Td(M)ch\Big(\sum_{\rho=0}^{l}(-1)^{\rho}\Big(\rho-\frac{l}{2}\Big)^{2}\wedge^{\rho}(W^{*})\Big)\Big\}^{(2d)}z^{2}\nonumber\\
&+\exp{\Big(\frac{1}{24}\cdot\frac{1}{30}(c_{2}(\mathcal{W}_{\alpha})+c_{2}(\mathcal{W}_{\beta}))\Big)}\Big\{-\frac{4}{3}\pi^{3}\sqrt{-1}Td(M)ch\Big(\sum_{\rho=0}^{l}(-1)^{\rho}\Big(\rho-\frac{l}{2}\Big)^{3}\nonumber\\
&\cdot\wedge^{\rho}(W^{*})\Big)\Big\}^{(2d)}z^{3}+\exp{\Big(\frac{1}{24}\cdot\frac{1}{30}(c_{2}(\mathcal{W}_{\alpha})+c_{2}(\mathcal{W}_{\beta}))\Big)}\Big\{\frac{2}{3}\pi^{4}Td(M)ch\Big(\sum_{\rho=0}^{l}(-1)^{\rho}\Big(\rho-\frac{l}{2}\Big)^{4}\nonumber\\
&\cdot\wedge^{\rho}(W^{*})\Big)\Big\}^{(2d)}z^{4}+\cdot\cdot\cdot;\nonumber
\end{align}
\begin{align}
&\widetilde{B_{1}}(z)=\exp{\Big(\frac{1}{24}\cdot\frac{1}{30}(c_{2}(\mathcal{W}_{\alpha})+c_{2}(\mathcal{W}_{\beta}))\Big)}\Big\{Td(M)\Big(-16-\frac{1}{30}(c_{2}(\mathcal{W}_{\alpha})+c_{2}(\mathcal{W}_{\beta}))\\
&+ch(\mathcal{W}_{\alpha})+ch(\mathcal{W}_{\beta})\Big)ch\Big(\sum_{\rho=0}^{l}(-1)^{\rho}\wedge^{\rho}(W^{*})\Big)+Td(M)ch\Big(\sum_{\rho=0}^{l}(-1)^{\rho}\wedge^{\rho}(W^{*})\nonumber\\
&\otimes[-W^{*}-W+T^{*}+T-2(d-l)]\Big)\Big\}^{(2d)}+\exp{\Big(\frac{1}{24}\cdot\frac{1}{30}(c_{2}(\mathcal{W}_{\alpha})+c_{2}(\mathcal{W}_{\beta}))\Big)}\nonumber\\
&\cdot\Big\{2\pi\sqrt{-1}Td(M)\Big(-16-\frac{1}{30}(c_{2}(\mathcal{W}_{\alpha})+c_{2}(\mathcal{W}_{\beta}))+ch(\mathcal{W}_{\alpha})+ch(\mathcal{W}_{\beta})\Big)ch\Big(\sum_{\rho=0}^{l}(-1)^{\rho}\nonumber\\
&\cdot\Big(\rho-\frac{l}{2}\Big)\wedge^{\rho}(W^{*})\Big)+Td(M)ch\Big(2\pi\sqrt{-1}\sum_{\rho=0}^{l}(-1)^{\rho}\rho\wedge^{\rho}(W^{*})\otimes[-W^{*}-W+T^{*}\nonumber\\
&+T-2(d-l)]+\sum_{\rho=0}^{l}(-1)^{\rho}\wedge^{\rho}(W^{*})\otimes[-l\pi\sqrt{-1}(-W^{*}-W+T^{*}+T-2(d-l))\nonumber\\
&-2\pi\sqrt{-1}(W^{*}-W)]\Big)\Big\}^{(2d)}z+\cdot\cdot\cdot.\nonumber
\end{align}
As in the proof of Proposition 3.2, we can have Proposition 4.1.
\end{proof}

\begin{thm}
Let $c_{1}(W)=c_{1}(M)=0$ and the first Pontrjagin classes $p_{1}(W)=p_{1}(M),$ then\\
1) if either $2d<16,$ $2d-l$ is odd or $2d<16, 2d-l\leq-6$ but $2d-l\neq-8,$ then
\begin{align}
&\{Td(M)ch(\wedge_{-1}(W^{*}))\}^{(2d)}=0;\\
&\Big\{Td(M)\Big(-16-\frac{1}{30}(c_{2}(\mathcal{W}_{\alpha})+c_{2}(\mathcal{W}_{\beta}))+ch(\mathcal{W}_{\alpha})+ch(\mathcal{W}_{\beta})\Big)ch(\wedge_{-1}(W^{*}))+Td(M)
\end{align}
\begin{align}
&\cdot ch(\wedge_{-1}(W^{*}))ch(A_{1})\Big\}^{(2d)}=0;\nonumber\\
&\Big\{Td(M)\Big(104+\frac{13}{30}(c_{2}(\mathcal{W}_{\alpha})+c_{2}(\mathcal{W}_{\beta}))+\frac{1}{1800}(c_{2}(\mathcal{W}_{\alpha})+c_{2}(\mathcal{W}_{\beta}))^{2}+ch(\overline{\mathcal{W}_{\alpha}})+ch(\overline{\mathcal{W}_{\beta}})\\
&-16ch(\mathcal{W}_{\alpha})-\frac{1}{30}(c_{2}(\mathcal{W}_{\alpha})+c_{2}(\mathcal{W}_{\beta}))ch(\mathcal{W}_{\alpha})-16ch(\mathcal{W}_{\beta})-\frac{1}{30}(c_{2}(\mathcal{W}_{\alpha})+c_{2}(\mathcal{W}_{\beta}))ch(\mathcal{W}_{\beta})\nonumber\\
&+ch(\mathcal{W}_{\alpha})ch(\mathcal{W}_{\beta})\Big)ch(\wedge_{-1}(W^{*}))+Td(M)\Big(-16-\frac{1}{30}(c_{2}(\mathcal{W}_{\alpha})+c_{2}(\mathcal{W}_{\beta}))+ch(\mathcal{W}_{\alpha})\nonumber\\
&+ch(\mathcal{W}_{\beta})\Big)ch(\wedge_{-1}(W^{*}))ch(A_{1})+Td(M)ch(\wedge_{-1}(W^{*}))ch(A_{2})\Big\}^{(2d)}=0,\nonumber
\end{align}
2) if either $2d<16,$ $2d-l$ is even or $2d<16, 2d-l\leq-7$ but $2d-l\neq-9,$ then
\begin{align}
&\Big\{2\pi\sqrt{-1}Td(M)ch\Big(\sum_{\rho=0}^{l}(-1)^{\rho}\Big(\rho-\frac{l}{2}\Big)\wedge^{\rho}(W^{*})\Big)\Big\}^{(2d)}=0;\\
&\Big\{2\pi\sqrt{-1}Td(M)\Big(-16-\frac{1}{30}(c_{2}(\mathcal{W}_{\alpha})+c_{2}(\mathcal{W}_{\beta}))+ch(\mathcal{W}_{\alpha})+ch(\mathcal{W}_{\beta})\Big)ch\Big(\sum_{\rho=0}^{l}(-1)^{\rho}\\
&\cdot\Big(\rho-\frac{l}{2}\Big)\wedge^{\rho}(W^{*})\Big)+Td(M)ch(A_{3})\Big\}^{(2d)}=0,\nonumber
\end{align}
3) if either $2d<16,$ $2d-l$ is odd or $2d<16, 2d-l\leq-8$ but $2d-l\neq-10,$ then
\begin{align}
&\Big\{-2\pi^{2}Td(M)ch\Big(\sum_{\rho=0}^{l}(-1)^{\rho}\Big(\rho-\frac{l}{2}\Big)^{2}\wedge^{\rho}(W^{*})\Big)\Big\}^{(2d)}+\Big\{\frac{l}{6}\pi^{2}Td(M)ch\Big(\sum_{\rho=0}^{l}(-1)^{\rho}\\
&\cdot\wedge^{\rho}(W^{*})\Big)\Big\}^{(2d)}=0,\nonumber
\end{align}
4) if either $2d<16,$ $2d-l$ is even or $2d<16, 2d-l\leq-9$ but $2d-l\neq-11,$ then
\begin{align}
&\Big\{-\frac{4}{3}\pi^{3}\sqrt{-1}Td(M)ch\Big(\sum_{\rho=0}^{l}(-1)^{\rho}\Big(\rho-\frac{l}{2}\Big)^{3}\wedge^{\rho}(W^{*})\Big)\Big\}^{(2d)}+\Big\{\frac{l}{3}\pi^{3}\sqrt{-1}Td(M)\\
&\cdot ch\Big(\sum_{\rho=0}^{l}(-1)^{\rho}\Big(\rho-\frac{l}{2}\Big)\wedge^{\rho}(W^{*})\Big)\Big\}^{(2d)}=0,\nonumber
\end{align}
5) if either $2d<16,$ $2d-l$ is odd or $2d<16, 2d-l\leq-10$ but $2d-l\neq-12,$ then
\begin{align}
&\Big\{\frac{2}{3}\pi^{4}Td(M)ch\Big(\sum_{\rho=0}^{l}(-1)^{\rho}\Big(\rho-\frac{l}{2}\Big)^{4}\wedge^{\rho}(W^{*})\Big)\Big\}^{(2d)}+\Big\{-\frac{l}{3}\pi^{4}Td(M)ch\Big(\sum_{\rho=0}^{l}(-1)^{\rho}\\
&\cdot\Big(\rho-\frac{l}{2}\Big)^{2}\wedge^{\rho}(W^{*})\Big)\Big\}^{(2d)}+\Big\{\frac{l^{2}}{72}\pi^{4}Td(M)ch\Big(\sum_{\rho=0}^{l}(-1)^{\rho}\wedge^{\rho}(W^{*})\Big)\Big\}^{(2d)}=0.\nonumber
\end{align}
\end{thm}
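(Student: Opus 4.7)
The plan is to apply Proposition 4.1 together with the classical structure of the graded ring of $SL_2(\mathbf{Z})$ modular forms. Recall that $M_k(SL_2(\mathbf{Z})) = 0$ whenever $k$ is odd, whenever $k = 2$, or whenever $k < 0$, while $M_0(SL_2(\mathbf{Z})) = \mathbf{C}$ consists only of constants. Proposition 4.1 identifies $\widetilde{a_n}(M, W, \tau)$ as a modular form of weight $2d - l + 8 + n$. Consequently, under any hypothesis that forces this weight into the ``trivial'' list above, $\widetilde{a_n}$ must vanish identically, and therefore each coefficient in its $q$-expansion must vanish separately.

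The execution is case by case. For part 1), take $n = 0$: the weight $2d - l + 8$ is odd precisely when $2d - l$ is odd, equals $2$ precisely when $2d - l = -6$, and is strictly negative precisely when $2d - l \leq -9$. The single excluded value $2d - l = -8$ corresponds to weight $0$, where $\widetilde{a_0}$ may be a nonzero constant and no vanishing is forced. Under the stated hypotheses we therefore conclude $\widetilde{a_0}(M, W, \tau) \equiv 0$; reading off the coefficients of $q^0$, $q^1$, and $q^2$ from the explicit expansion of $\widetilde{a_0}$ recorded in Proposition 4.1 produces the three identities (4.16)--(4.18). Parts 2)--5) proceed identically using $\widetilde{a_1}, \widetilde{a_2}, \widetilde{a_3}, \widetilde{a_4}$: each increment of $n$ flips the parity of $2d - l + 8 + n$, which is why the parity condition on $2d - l$ alternates between odd and even as one passes from $\widetilde{a_n}$ to $\widetilde{a_{n+1}}$, and the excluded values $2d - l \in \{-9, -10, -11, -12\}$ are in each case precisely the unique value at which the corresponding weight becomes $0$. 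The number of $q$-coefficients one extracts in each part matches exactly how many $q$-terms Proposition 4.1 has recorded for the corresponding $\widetilde{a_n}$.

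There is no genuine obstacle here: no new geometric or modular input is needed, since the analytic heavy lifting has already been carried out in Proposition 4.1 (the weight computation and the explicit $q$-expansion) and in Theorem 2.7 (the underlying Jacobi-form transformation laws). The only care required is the bookkeeping -- matching each parity-and-inequality hypothesis with the ``trivial weight'' window for $M_*(SL_2(\mathbf{Z}))$, and verifying that the coefficient of $q^k$ inside $\widetilde{a_n}$ from Proposition 4.1 reproduces verbatim the characteristic-form expression stated in Theorem 4.2. The argument is the direct analogue of the proof of Theorem 3.3, with the weight shift from $2d - l + 4 + n$ to $2d - l + 8 + n$ reflecting the extra factor $\varphi(\tau)^{(8)}\mathrm{ch}(\mathcal{V}_\beta)$ introduced in the $E_8 \times E_8$ elliptic genus.
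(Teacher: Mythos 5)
Your proposal is correct and is exactly the argument the paper intends (the paper states Theorem 4.2 without writing out a proof, relying on Proposition 4.1 and the same vanishing facts $M_k(SL_2(\mathbf{Z}))=0$ for $k$ odd, $k=2$, or $k<0$, precisely as in the $E_8$ case of Theorem 3.3). Your bookkeeping of which value of $2d-l$ lands each $\widetilde{a_n}$ in weight $2$ versus weight $0$ matches the excluded cases $2d-l\in\{-8,-9,-10,-11,-12\}$ in the statement, so nothing is missing.
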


\begin{thm}
Let $c_{1}(W)=c_{1}(M)=0$ and the first Pontrjagin classes $p_{1}(W)=p_{1}(M),$ then\\
1) if $2d-l=-4,$ then
\begin{align}
&\Big\{Td(M)\Big(-16-\frac{1}{30}(c_{2}(\mathcal{W}_{\alpha})+c_{2}(\mathcal{W}_{\beta}))+ch(\mathcal{W}_{\alpha})+ch(\mathcal{W}_{\beta})\Big)ch(\wedge_{-1}(W^{*}))+Td(M)\\
&\cdot ch(\wedge_{-1}(W^{*}))ch(A_{1})\Big\}^{(2d)}=240\{Td(M)ch(\wedge_{-1}(W^{*}))\}^{(2d)};\nonumber\\
&\Big\{Td(M)\Big(104+\frac{13}{30}(c_{2}(\mathcal{W}_{\alpha})+c_{2}(\mathcal{W}_{\beta}))+\frac{1}{1800}(c_{2}(\mathcal{W}_{\alpha})+c_{2}(\mathcal{W}_{\beta}))^{2}+ch(\overline{\mathcal{W}_{\alpha}})+ch(\overline{\mathcal{W}_{\beta}})\\
&-16ch(\mathcal{W}_{\alpha})-\frac{1}{30}(c_{2}(\mathcal{W}_{\alpha})+c_{2}(\mathcal{W}_{\beta}))ch(\mathcal{W}_{\alpha})-16ch(\mathcal{W}_{\beta})-\frac{1}{30}(c_{2}(\mathcal{W}_{\alpha})+c_{2}(\mathcal{W}_{\beta}))ch(\mathcal{W}_{\beta})\nonumber\\
&+ch(\mathcal{W}_{\alpha})ch(\mathcal{W}_{\beta})\Big)ch(\wedge_{-1}(W^{*}))+Td(M)\Big(-16-\frac{1}{30}(c_{2}(\mathcal{W}_{\alpha})+c_{2}(\mathcal{W}_{\beta}))+ch(\mathcal{W}_{\alpha})\nonumber\\
&+ch(\mathcal{W}_{\beta})\Big)ch(\wedge_{-1}(W^{*}))ch(A_{1})+Td(M)ch(\wedge_{-1}(W^{*}))ch(A_{2})\Big\}^{(2d)}=2160\{Td(M)\nonumber\\
&\cdot ch(\wedge_{-1}(W^{*}))\}^{(2d)},\nonumber
\end{align}
where
\begin{align}
A_{1}&=-W^{*}-W+T^{*}+T+l+4;\\
A_{2}&=-W^{*}-W+\wedge^{2}(W^{*})+\wedge^{2}(W)+W^{*}\otimes W-W^{*}\otimes T^{*}\\
&-W^{*}\otimes T-W\otimes T^{*}-W\otimes T+T^{*}+T+S^{2}(T^{*})+S^{2}(T)\nonumber\\
&+T^{*}\otimes T-(d-l)(7+l),\nonumber
\end{align}
2) if $2d-l=-2,$ then
\begin{align}
&\Big\{Td(M)\Big(-16-\frac{1}{30}(c_{2}(\mathcal{W}_{\alpha})+c_{2}(\mathcal{W}_{\beta}))+ch(\mathcal{W}_{\alpha})+ch(\mathcal{W}_{\beta})\Big)ch(\wedge_{-1}(W^{*}))+Td(M)\\
&\cdot ch(\wedge_{-1}(W^{*}))ch(A_{1})\Big\}^{(2d)}=-504\{Td(M)ch(\wedge_{-1}(W^{*}))\}^{(2d)};\nonumber
\end{align}
\begin{align}
&\Big\{Td(M)\Big(104+\frac{13}{30}(c_{2}(\mathcal{W}_{\alpha})+c_{2}(\mathcal{W}_{\beta}))+\frac{1}{1800}(c_{2}(\mathcal{W}_{\alpha})+c_{2}(\mathcal{W}_{\beta}))^{2}+ch(\overline{\mathcal{W}_{\alpha}})+ch(\overline{\mathcal{W}_{\beta}})\\
&-16ch(\mathcal{W}_{\alpha})-\frac{1}{30}(c_{2}(\mathcal{W}_{\alpha})+c_{2}(\mathcal{W}_{\beta}))ch(\mathcal{W}_{\alpha})-16ch(\mathcal{W}_{\beta})-\frac{1}{30}(c_{2}(\mathcal{W}_{\alpha})+c_{2}(\mathcal{W}_{\beta}))ch(\mathcal{W}_{\beta})\nonumber\\
&+ch(\mathcal{W}_{\alpha})ch(\mathcal{W}_{\beta})\Big)ch(\wedge_{-1}(W^{*}))+Td(M)\Big(-16-\frac{1}{30}(c_{2}(\mathcal{W}_{\alpha})+c_{2}(\mathcal{W}_{\beta}))+ch(\mathcal{W}_{\alpha})\nonumber\\
&+ch(\mathcal{W}_{\beta})\Big)ch(\wedge_{-1}(W^{*}))ch(A_{1})+Td(M)ch(\wedge_{-1}(W^{*}))ch(A_{2})\Big\}^{(2d)}=-16632\{Td(M)\nonumber\\
&\cdot ch(\wedge_{-1}(W^{*}))\}^{(2d)},\nonumber
\end{align}
where
\begin{align}
A_{1}&=-W^{*}-W+T^{*}+T+l+2;\\
A_{2}&=-W^{*}-W+\wedge^{2}(W^{*})+\wedge^{2}(W)+W^{*}\otimes W-W^{*}\otimes T^{*}\\
&-W^{*}\otimes T-W\otimes T^{*}-W\otimes T+T^{*}+T+S^{2}(T^{*})+S^{2}(T)\nonumber\\
&+T^{*}\otimes T-(d-l)(5+l),\nonumber
\end{align}
3) if $2d-l=0,$ then
\begin{align}
&\Big\{Td(M)\Big(-16-\frac{1}{30}(c_{2}(\mathcal{W}_{\alpha})+c_{2}(\mathcal{W}_{\beta}))+ch(\mathcal{W}_{\alpha})+ch(\mathcal{W}_{\beta})\Big)ch(\wedge_{-1}(W^{*}))+Td(M)\\
&\cdot ch(\wedge_{-1}(W^{*}))ch(A_{1})\Big\}^{(2d)}=480\{Td(M)ch(\wedge_{-1}(W^{*}))\}^{(2d)};\nonumber\\
&\Big\{Td(M)\Big(104+\frac{13}{30}(c_{2}(\mathcal{W}_{\alpha})+c_{2}(\mathcal{W}_{\beta}))+\frac{1}{1800}(c_{2}(\mathcal{W}_{\alpha})+c_{2}(\mathcal{W}_{\beta}))^{2}+ch(\overline{\mathcal{W}_{\alpha}})+ch(\overline{\mathcal{W}_{\beta}})\\
&-16ch(\mathcal{W}_{\alpha})-\frac{1}{30}(c_{2}(\mathcal{W}_{\alpha})+c_{2}(\mathcal{W}_{\beta}))ch(\mathcal{W}_{\alpha})-16ch(\mathcal{W}_{\beta})-\frac{1}{30}(c_{2}(\mathcal{W}_{\alpha})+c_{2}(\mathcal{W}_{\beta}))ch(\mathcal{W}_{\beta})\nonumber\\
&+ch(\mathcal{W}_{\alpha})ch(\mathcal{W}_{\beta})\Big)ch(\wedge_{-1}(W^{*}))+Td(M)\Big(-16-\frac{1}{30}(c_{2}(\mathcal{W}_{\alpha})+c_{2}(\mathcal{W}_{\beta}))+ch(\mathcal{W}_{\alpha})\nonumber\\
&+ch(\mathcal{W}_{\beta})\Big)ch(\wedge_{-1}(W^{*}))ch(A_{1})+Td(M)ch(\wedge_{-1}(W^{*}))ch(A_{2})\Big\}^{(2d)}=61920\{Td(M)\nonumber\\
&\cdot ch(\wedge_{-1}(W^{*}))\}^{(2d)},\nonumber
\end{align}
where
\begin{align}
A_{1}&=-W^{*}-W+T^{*}+T+l;\\
A_{2}&=-W^{*}-W+\wedge^{2}(W^{*})+\wedge^{2}(W)+W^{*}\otimes W-W^{*}\otimes T^{*}\\
&-W^{*}\otimes T-W\otimes T^{*}-W\otimes T+T^{*}+T+S^{2}(T^{*})+S^{2}(T)\nonumber\\
&+T^{*}\otimes T-(d-l)(3+l),\nonumber
\end{align}
4) if $2d-l=2,$ then
\begin{align}
&\Big\{Td(M)\Big(-16-\frac{1}{30}(c_{2}(\mathcal{W}_{\alpha})+c_{2}(\mathcal{W}_{\beta}))+ch(\mathcal{W}_{\alpha})+ch(\mathcal{W}_{\beta})\Big)ch(\wedge_{-1}(W^{*}))+Td(M)\\
&\cdot ch(\wedge_{-1}(W^{*}))ch(A_{1})\Big\}^{(2d)}=-264\{Td(M)ch(\wedge_{-1}(W^{*}))\}^{(2d)};\nonumber\\
&\Big\{Td(M)\Big(104+\frac{13}{30}(c_{2}(\mathcal{W}_{\alpha})+c_{2}(\mathcal{W}_{\beta}))+\frac{1}{1800}(c_{2}(\mathcal{W}_{\alpha})+c_{2}(\mathcal{W}_{\beta}))^{2}+ch(\overline{\mathcal{W}_{\alpha}})+ch(\overline{\mathcal{W}_{\beta}})\\
&-16ch(\mathcal{W}_{\alpha})-\frac{1}{30}(c_{2}(\mathcal{W}_{\alpha})+c_{2}(\mathcal{W}_{\beta}))ch(\mathcal{W}_{\alpha})-16ch(\mathcal{W}_{\beta})-\frac{1}{30}(c_{2}(\mathcal{W}_{\alpha})+c_{2}(\mathcal{W}_{\beta}))ch(\mathcal{W}_{\beta})\nonumber\\
&+ch(\mathcal{W}_{\alpha})ch(\mathcal{W}_{\beta})\Big)ch(\wedge_{-1}(W^{*}))+Td(M)\Big(-16-\frac{1}{30}(c_{2}(\mathcal{W}_{\alpha})+c_{2}(\mathcal{W}_{\beta}))+ch(\mathcal{W}_{\alpha})\nonumber\\
&+ch(\mathcal{W}_{\beta})\Big)ch(\wedge_{-1}(W^{*}))ch(A_{1})+Td(M)ch(\wedge_{-1}(W^{*}))ch(A_{2})\Big\}^{(2d)}=-135432\{Td(M)\nonumber\\
&\cdot ch(\wedge_{-1}(W^{*}))\}^{(2d)},\nonumber
\end{align}
where
\begin{align}
A_{1}&=-W^{*}-W+T^{*}+T+l-2;\\
A_{2}&=-W^{*}-W+\wedge^{2}(W^{*})+\wedge^{2}(W)+W^{*}\otimes W-W^{*}\otimes T^{*}\\
&-W^{*}\otimes T-W\otimes T^{*}-W\otimes T+T^{*}+T+S^{2}(T^{*})+S^{2}(T)\nonumber\\
&+T^{*}\otimes T-(d-l)(1+l),\nonumber
\end{align}
5) if $2d-l=4,$ then
\begin{align}
&\Big\{Td(M)\Big(104+\frac{13}{30}(c_{2}(\mathcal{W}_{\alpha})+c_{2}(\mathcal{W}_{\beta}))+\frac{1}{1800}(c_{2}(\mathcal{W}_{\alpha})+c_{2}(\mathcal{W}_{\beta}))^{2}+ch(\overline{\mathcal{W}_{\alpha}})+ch(\overline{\mathcal{W}_{\beta}})\\
&-16ch(\mathcal{W}_{\alpha})-\frac{1}{30}(c_{2}(\mathcal{W}_{\alpha})+c_{2}(\mathcal{W}_{\beta}))ch(\mathcal{W}_{\alpha})-16ch(\mathcal{W}_{\beta})-\frac{1}{30}(c_{2}(\mathcal{W}_{\alpha})+c_{2}(\mathcal{W}_{\beta}))ch(\mathcal{W}_{\beta})\nonumber\\
&+ch(\mathcal{W}_{\alpha})ch(\mathcal{W}_{\beta})\Big)ch(\wedge_{-1}(W^{*}))+Td(M)\Big(-16-\frac{1}{30}(c_{2}(\mathcal{W}_{\alpha})+c_{2}(\mathcal{W}_{\beta}))+ch(\mathcal{W}_{\alpha})\nonumber\\
&+ch(\mathcal{W}_{\beta})\Big)ch(\wedge_{-1}(W^{*}))ch(A_{1})+Td(M)ch(\wedge_{-1}(W^{*}))ch(A_{2})\Big\}^{(2d)}=196560\{Td(M)\nonumber\\
&\cdot ch(\wedge_{-1}(W^{*}))\}^{(2d)}-24\Big\{Td(M)\Big(-16-\frac{1}{30}(c_{2}(\mathcal{W}_{\alpha})+c_{2}(\mathcal{W}_{\beta}))+ch(\mathcal{W}_{\alpha})+ch(\mathcal{W}_{\beta})\Big)\nonumber\\
&\cdot ch(\wedge_{-1}(W^{*}))+Td(M)ch(\wedge_{-1}(W^{*}))ch(A_{1})\Big\}^{(2d)},\nonumber
\end{align}
where
\begin{align}
A_{1}&=-W^{*}-W+T^{*}+T+l-4;\\
A_{2}&=-W^{*}-W+\wedge^{2}(W^{*})+\wedge^{2}(W)+W^{*}\otimes W-W^{*}\otimes T^{*}\\
&-W^{*}\otimes T-W\otimes T^{*}-W\otimes T+T^{*}+T+S^{2}(T^{*})+S^{2}(T)\nonumber\\
&+T^{*}\otimes T+(d-l)(1-l),\nonumber
\end{align}
6) if $2d-l=6,$ then
\begin{align}
&\Big\{Td(M)\Big(-16-\frac{1}{30}(c_{2}(\mathcal{W}_{\alpha})+c_{2}(\mathcal{W}_{\beta}))+ch(\mathcal{W}_{\alpha})+ch(\mathcal{W}_{\beta})\Big)ch(\wedge_{-1}(W^{*}))+Td(M)\\
&\cdot ch(\wedge_{-1}(W^{*}))ch(A_{1})\Big\}^{(2d)}=-24\{Td(M)ch(\wedge_{-1}(W^{*}))\}^{(2d)};\nonumber\\
&\Big\{Td(M)\Big(104+\frac{13}{30}(c_{2}(\mathcal{W}_{\alpha})+c_{2}(\mathcal{W}_{\beta}))+\frac{1}{1800}(c_{2}(\mathcal{W}_{\alpha})+c_{2}(\mathcal{W}_{\beta}))^{2}+ch(\overline{\mathcal{W}_{\alpha}})+ch(\overline{\mathcal{W}_{\beta}})\\
&-16ch(\mathcal{W}_{\alpha})-\frac{1}{30}(c_{2}(\mathcal{W}_{\alpha})+c_{2}(\mathcal{W}_{\beta}))ch(\mathcal{W}_{\alpha})-16ch(\mathcal{W}_{\beta})-\frac{1}{30}(c_{2}(\mathcal{W}_{\alpha})+c_{2}(\mathcal{W}_{\beta}))ch(\mathcal{W}_{\beta})\nonumber\\
&+ch(\mathcal{W}_{\alpha})ch(\mathcal{W}_{\beta})\Big)ch(\wedge_{-1}(W^{*}))+Td(M)\Big(-16-\frac{1}{30}(c_{2}(\mathcal{W}_{\alpha})+c_{2}(\mathcal{W}_{\beta}))+ch(\mathcal{W}_{\alpha})\nonumber\\
&+ch(\mathcal{W}_{\beta})\Big)ch(\wedge_{-1}(W^{*}))ch(A_{1})+Td(M)ch(\wedge_{-1}(W^{*}))ch(A_{2})\Big\}^{(2d)}=-196632\{Td(M)\nonumber\\
&\cdot ch(\wedge_{-1}(W^{*}))\}^{(2d)},\nonumber
\end{align}
where
\begin{align}
A_{1}&=-W^{*}-W+T^{*}+T+l-6;\\
A_{2}&=-W^{*}-W+\wedge^{2}(W^{*})+\wedge^{2}(W)+W^{*}\otimes W-W^{*}\otimes T^{*}\\
&-W^{*}\otimes T-W\otimes T^{*}-W\otimes T+T^{*}+T+S^{2}(T^{*})+S^{2}(T)\nonumber\\
&+T^{*}\otimes T+(d-l)(3-l),\nonumber
\end{align}
7) if $2d-l=8,$ then
\begin{align}
&\Big\{Td(M)\Big(104+\frac{13}{30}(c_{2}(\mathcal{W}_{\alpha})+c_{2}(\mathcal{W}_{\beta}))+\frac{1}{1800}(c_{2}(\mathcal{W}_{\alpha})+c_{2}(\mathcal{W}_{\beta}))^{2}+ch(\overline{\mathcal{W}_{\alpha}})+ch(\overline{\mathcal{W}_{\beta}})\\
&-16ch(\mathcal{W}_{\alpha})-\frac{1}{30}(c_{2}(\mathcal{W}_{\alpha})+c_{2}(\mathcal{W}_{\beta}))ch(\mathcal{W}_{\alpha})-16ch(\mathcal{W}_{\beta})-\frac{1}{30}(c_{2}(\mathcal{W}_{\alpha})+c_{2}(\mathcal{W}_{\beta}))ch(\mathcal{W}_{\beta})\nonumber
\end{align}
\begin{align}
&+ch(\mathcal{W}_{\alpha})ch(\mathcal{W}_{\beta})\Big)ch(\wedge_{-1}(W^{*}))+Td(M)\Big(-16-\frac{1}{30}(c_{2}(\mathcal{W}_{\alpha})+c_{2}(\mathcal{W}_{\beta}))+ch(\mathcal{W}_{\alpha})\nonumber\\
&+ch(\mathcal{W}_{\beta})\Big)ch(\wedge_{-1}(W^{*}))ch(A_{1})+Td(M)ch(\wedge_{-1}(W^{*}))ch(A_{2})\Big\}^{(2d)}=146880\{Td(M)\nonumber\\
&\cdot ch(\wedge_{-1}(W^{*}))\}^{(2d)}+216\Big\{Td(M)\Big(-16-\frac{1}{30}(c_{2}(\mathcal{W}_{\alpha})+c_{2}(\mathcal{W}_{\beta}))+ch(\mathcal{W}_{\alpha})+ch(\mathcal{W}_{\beta})\Big)\nonumber\\
&\cdot ch(\wedge_{-1}(W^{*}))+Td(M)ch(\wedge_{-1}(W^{*}))ch(A_{1})\Big\}^{(2d)},\nonumber
\end{align}
where
\begin{align}
A_{1}&=-W^{*}-W+T^{*}+T+l-8;\\
A_{2}&=-W^{*}-W+\wedge^{2}(W^{*})+\wedge^{2}(W)+W^{*}\otimes W-W^{*}\otimes T^{*}\\
&-W^{*}\otimes T-W\otimes T^{*}-W\otimes T+T^{*}+T+S^{2}(T^{*})+S^{2}(T)\nonumber\\
&+T^{*}\otimes T+(d-l)(5-l).\nonumber
\end{align}
\end{thm}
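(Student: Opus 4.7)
The plan is to mirror the proof of Theorem 3.3, but applied to Proposition 4.1 instead of Proposition 3.2. By Proposition 4.1, under the hypotheses $c_{1}(W)=c_{1}(M)=0$ and $p_{1}(W)=p_{1}(M)$, the series $\widetilde{a_{0}}(M, W, \tau)$ is a modular form of weight $2d-l+8$ over $SL_{2}(\mathbf{Z})$. So for the seven cases $2d-l\in\{-4,-2,0,2,4,6,8\}$, the relevant weights are $\{4,6,8,10,12,14,16\}$ respectively. I would then invoke the classical fact, recalled at the beginning of Section 3, that the graded ring of modular forms over $SL_{2}(\mathbf{Z})$ is generated by $G_{4}(\tau)$ and $G_{6}(\tau)$, so the space of modular forms in each of these weights is explicitly known.

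First I would treat the five one-dimensional cases, namely weights $4$, $6$, $8$, $10$, $14$, corresponding to $2d-l=-4,-2,0,2,6$. In each of these the space of modular forms is spanned by a single monomial: $G_{4}$, $G_{6}$, $G_{4}^{2}$, $G_{4}G_{6}$, $G_{4}^{2}G_{6}$ respectively. I would write $\widetilde{a_{0}}(M,W,\tau)=\lambda\cdot(\text{generator})$, where $\lambda$ is a degree $2d$ form, and then match the constant coefficients and the coefficients of $q$ in the $q$-expansions, using the expansions of $G_{4}$, $G_{6}$ and their products already written out in the proof of Theorem 3.3. This pins down $\lambda$ from the constant term and converts the $q$-coefficient identity into the desired cancellation formulas, with the $A_{1}, A_{2}$ specializations obtained by substituting the appropriate value of $d-l$ into the generic expressions from Proposition 4.1.

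Next I would handle the two-dimensional cases, namely weights $12$ and $16$, corresponding to $2d-l=4$ and $2d-l=8$. For $2d-l=4$, I would write
\begin{align*}
\widetilde{a_{0}}(M,W,\tau)=\lambda_{1}G_{4}(\tau)^{3}+\lambda_{2}G_{6}(\tau)^{2},
\end{align*}
and for $2d-l=8$, analogously
\begin{align*}
\widetilde{a_{0}}(M,W,\tau)=\overline{\lambda_{1}}G_{4}(\tau)^{4}+\overline{\lambda_{2}}G_{4}(\tau)G_{6}(\tau)^{2},
\end{align*}
with $\lambda_{i},\overline{\lambda_{i}}$ degree $2d$ forms. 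Comparing the coefficients of $1$, $q$, $q^{2}$ in each case yields three equations in two unknowns; solving the first two to eliminate $\lambda_{2}$ (respectively $\overline{\lambda_{2}}$) and substituting into the third produces the stated relations. This is exactly how the parallel cases $2d-l=8$ and $2d-l=12$ were handled in the proof of Theorem 3.3.

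The conceptual content is entirely routine given Proposition 4.1; the main obstacle is purely bookkeeping, namely keeping careful track of the degree $2d$ characteristic forms that arise as the scalar multipliers $\lambda, \lambda_{i}, \overline{\lambda_{i}}$ and of the arithmetic coefficients $240, -504, 480, -264, 196560, -24, 146880$ etc.\ which come directly from the leading $q$-coefficients of $G_{4}$, $G_{6}$ and their products. The specialization of $A_{1}$ and $A_{2}$ in each sub-case is obtained by replacing $-2(d-l)$ in the generic formulas from Proposition 4.1 by the value forced by the constraint $2d-l$ fixed (so $2(d-l)=l-(2d-l)$, giving the shifts $l+4, l+2, l, l-2, l-4, l-6, l-8$), which matches exactly the $A_{1}$ formulas listed in the statement.
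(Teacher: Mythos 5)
Your proposal is correct and is essentially the paper's intended argument: the paper states this theorem without a separate proof precisely because it is the verbatim repetition of the proof of Theorem 3.4, with $\overline{a_{0}}$ replaced by $\widetilde{a_{0}}$ of weight $2d-l+8$, and your identification of the one-dimensional weights $4,6,8,10,14$ and the two-dimensional weights $12,16$, together with the $q$-expansion comparisons, reproduces exactly the constants $240, -504, 480, -264, 196560, -24, 146880, 216$, etc. (Only note that the argument you cite is the paper's Theorem 3.4, not 3.3, and the identity for the $A_{1}$ shift should read $-2(d-l)=l-(2d-l)$; the shifts you list are nonetheless correct.)
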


\section{ Acknowledgements }

The author was supported in part by National Natural Science Foundation of China (NSFC) No.11771070. 
The author thanks the referee for his (or her) careful reading and helpful comments.

\vskip 1 true cm


\bigskip
\bigskip

\noindent {\footnotesize {\it S. Liu} \\
{School of Mathematics and Statistics, Northeast Normal University, Changchun 130024, China}\\
{Email: liusy719@nenu.edu.cn}

\noindent {\footnotesize {\it Y. Wang} \\
{School of Mathematics and Statistics, Northeast Normal University, Changchun 130024, China}\\
{Email: wangy581@nenu.edu.cn}

\end{document}